\documentclass[journal]{IEEEtran}
\usepackage[T1]{fontenc}

\usepackage{amsmath}
\usepackage{amssymb}
\usepackage{amsthm}
\usepackage{amsmath,amssymb,amsfonts,amsthm,bm,mathtools}
\usepackage{arydshln}
\usepackage{nicematrix}
\usepackage{array}
\usepackage{cite}
\usepackage{balance}
\usepackage{nameref}
\usepackage{caption}
\usepackage{subcaption}

\usepackage{graphicx}
\usepackage[shortlabels]{enumitem}
\usepackage[utf8]{inputenc}
\usepackage[scr]{rsfso}
\usepackage{nicematrix}
\usepackage{arydshln}
\usepackage{comment}
\usepackage[safe]{tipa}

\usepackage{needspace}
\renewcommand{\Needspace}[1]{}

\usepackage{nccmath}
\usepackage{microtype}
\usepackage[strict]{changepage}

\usepackage{lipsum}

\newcommand{\R}{\mathbb{R}}
\newcommand{\N}{\mathbb{N}}

\DeclareMathOperator{\Span}{span}
\DeclareMathOperator{\dist}{dist}
\DeclareMathOperator{\col}{col}
\DeclareMathOperator{\supp}{supp}

\newcommand{\res}{\mathcal R} 
\newtheorem{assumption}{Assumption}[section]

\newtheorem{definition}[assumption]{Definition}
\newtheorem{theorem}[assumption]{Theorem}
\newtheorem{proposition}[assumption]{Proposition}
\newtheorem{lemma}[assumption]{Lemma}
\newtheorem{corollary}[assumption]{Corollary}
\theoremstyle{remark}
\newtheorem{remark}[assumption]{Remark}

\expandafter\def\expandafter\normalsize\expandafter{%
    \normalsize%
    \setlength\abovedisplayskip{4pt}%4
    \setlength\belowdisplayskip{4pt}%4
    \setlength\abovedisplayshortskip{2pt}%3
    \setlength\belowdisplayshortskip{2pt}%3
}

\allowdisplaybreaks
\usepackage{hyperref}   % to set up hyperlinks

\hypersetup{
	colorlinks=true,
	linkcolor=black,
	citecolor=black,
	urlcolor=black,
}

\begin{document}
\raggedbottom % Keep normal spacing within columns instead of stretching it.
\title{Model-Based Koopman--Galerkin Lifting for Guaranteed H-Infinity Output Feedback Control of Nonlinear Systems}

\author{Burak~K\"{u}rk\c{c}\"{u}, Christopher Phan,  Aleksandar Zecevic, Maryam Khanbaghi% 
\thanks{Burak~K\"{u}rk\c{c}\"{u}, Christopher Phan, Aleksandar Zecevic, and Maryam Khanbaghi are with the Department of Electrical and Computer Engineering,
        Santa Clara University, Santa Clara, CA 95053, USA
        (e-mail: \{bkurkcu,cphan,azecevic,mkhanbaghi\}@scu.edu)}% 

}
\markboth{}{}

\maketitle

\begin{abstract}
In this paper, we present a model-based Koopman--Galerkin framework for output-feedback control of nonlinear systems with known dynamics. With a suitable actuator augmentation, the finite-dimensional realization preserves the actuator dynamics and the constant input matrix. The approach is based on an exact decomposition of the lifted dynamics into an LTI system and a finite-order closure residual. It also applies to non-control-affine systems and allows dictionaries that do not span an invariant subspace. Regional bounds are derived for the closure residual, actuator-realization defect, and output-reconstruction error. We use these bounds to formulate a standard $\mathcal{H}_\infty$ output-feedback problem. The controller is linear and dynamic and is implemented using only the measured tracking error. Neither full-state measurements nor a separate state observer are required. Lifting is performed offline. With the state coordinates retained in the dictionary, we obtain explicit transient bounds on the original state and tracking error. A sufficient condition for regional containment is also given. Under this condition, the nonlinear closed loop is forward complete, and the state and tracking error are uniformly ultimately bounded. The cart--pendulum example illustrates the nonlinear closed-loop guarantees. The proposed output feedback gives lower RMSE and peak tracking error on average than Jacobian-based $\mathcal{H}_\infty$ feedback, full-state backstepping, and Koopman-based hybrid MPC.
\end{abstract}

\begin{IEEEkeywords}
Galerkin approximation, $\mathcal{H}_\infty$ control, nonlinear
systems, Koopman theory, output feedback, ultimate boundedness.
\end{IEEEkeywords}

\IEEEpeerreviewmaketitle
\section{Introduction}

A common approach to controlling nonlinear systems involves
designing controllers based on finite-dimensional linear models.
Doyle's work on linear--quadratic--Gaussian (LQG) regulators
offers a historical lesson:  nominal stability and even optimality do
not by themselves imply guaranteed margins~\cite{doyle1978guaranteed}. A related issue arises in lifting-based
control: stabilizing an approximate model does not by itself
guarantee stability of the original nonlinear system.
Establishing such guarantees requires accounting for the
approximation error in the feedback loop~\cite{StrasserARC2026}.
For known continuous-time nonlinear systems, we incorporate
finite-order Galerkin residual bounds into classical
$\mathcal{H}_\infty$ output-feedback synthesis and establish
guarantees for the original nonlinear system.

The classical linearization-based control does offer local
stability but does not specify the neighborhood in which the
stability guarantees hold~\cite{khalil2002nonlinear,ozccelik2023experimental}. Switching between robust controllers designed from local linearizations has also been used for nonlinear flight dynamics~\cite{albayrak2022switching}. The nonlinear terms neglected in the linearization can greatly alter the region of attraction even when the linearized dynamics remain unchanged~\cite{chakraborty2011nonlinear}. Some studies address this limitation
by estimating regions of attraction for linear controllers through
nonlinear analysis~\cite{tedrake2010lqr,chakraborty2011nonlinear}. However, this kind of post-design verification does not deal with the question of how approximation-error bounds over a specified operating region can be included in controller synthesis.

There are other linearization approaches that have been studied in the literature. Examples include Carleman linearization~\cite{KowalskiSteeb1991}, immersion techniques~\cite{LevineMarino1986}, and Koopman operator (KO) theory~\cite{koopman1931hamiltonian,Mezic2005,OttoRowley2021}. Lifting techniques can reduce a nonlinear modeling problem to a modeling problem involving observables. For projection-based lifting, an approximation space and a projection rule are required, as in classical Proper Orthogonal Decomposition (POD)--Galerkin model reduction~\cite{kunisch2002galerkin,Kasnakoglu2009}. Data-driven lifting methods for finite-dimensional approximation include extended dynamic mode decomposition (EDMD)~\cite{Williams2015} and generator techniques in continuous-time~\cite{Klus2020}. For a fixed dictionary, the infinite-data limit of EDMD is the orthogonal Galerkin projection of the discrete-time Koopman operator onto the dictionary span~\cite{KordaMezic2018JNS}. 

On the other hand, when the governing dynamics are given, Koopman direct encoding~\cite{Asada2023Direct} calculates the Galerkin inner products without using trajectory data for KO estimation. However, exact finite-dimensional Koopman closure still requires an invariant subspace of observables, and linear immersions are subject to other limitations imposed by the nonlinear dynamics, such as the nonexistence of continuous one-to-one immersions over certain regions with multiple $\omega$-limit sets~\cite{LiuOzaySontag2025}.

\textit{Control} introduces another fundamental challenge since lifted input operators can depend on the state or even on the input itself~\cite{Iacob2024}. Some approaches estimate both the lifted dynamics and the input matrix from data~\cite{KordaMezic2018Aut}, while control-affine systems can be analyzed using bilinear representations with input-dependent state matrices~\cite{GoswamiPaley2022,PeitzOttoRowley2020}. Obtaining an exact linear time-invariant(LTI) model instead of a bilinear model requires a constant input matrix, which can impose strong constraints on the nonlinear dynamics~\cite{Heeg2026}. Under additional assumptions, exact LTI Koopman representations also require affine linear dynamics and observables. On the actuation side, an approach is to augment the actuator state to preserve the physical input, as in the discrete-time control-coherent Koopman modeling~\cite{Asada2024}.

Regardless of representation, a finite-dimensional lifted model is generally an approximation of the underlying nonlinear model. Residual dynamic mode decomposition (ResDMD)~\cite{ColbrookTownsend2024} uses residual information to verify Koopman spectral computations. For closed-loop control, finite-data error bounds have been developed for EDMD approximations in~\cite{Nuske2023}. Pointwise and uniform bounds are available for kernel-based approximations~\cite{Koehne2025}, as well as bounds proportional to the distance from the equilibrium point~\cite{Bold2025}. Robust stabilization approaches using linear matrix inequalities (LMI) and sum-of-squares have been developed for bilinear Koopman models of control-affine nonlinear systems~\cite{Strasser2025TAC,Strasser2025EJC}. Tube-based model predictive control (MPC) approaches have been used to handle Koopman approximation errors~\cite{Zhang2022}. Physics-based Koopman predictors have also been combined with data-driven residual models for MPC~\cite{bruder2025koopman}. Uniform bounds have been used to establish practical stability guarantees for linear parameter-varying (LPV)--Koopman MPC~\cite{LiuLPV2026}. Frequency-domain integral quadratic constraint(IQC) methods have been proposed to characterize residual errors in LPV--Koopman models~\cite{Eyuboglu2026}. 

For the output feedback,  the same approximation errors should be considered without having full state information. One approach for unknown systems in discrete time uses delay-embedded input--output states under uniform observability. They use bilinear Koopman models with proportional residual bounds to achieve stabilization~\cite{Strasser2026io}. For known dynamics, we seek a finite-dimensional LTI-plus-residual decomposition in continuous-time. Our goal is to obtain guarantees for the original nonlinear system using only the measured tracking error for output feedback.

Given nonlinear dynamics, including the actuator model, our approach evaluates the nominal directional derivatives of the chosen observables. Galerkin projection gives the state matrix and the closure residual. The actuator-realization defect and output-reconstruction error enter through distinct channels. Koopman eigenfunctions and polynomial or other continuously differentiable observables can be used under the regularity conditions defined below. The Galerkin projection follows the formulation in~\cite{Klus2020}, with inner products evaluated directly from the model as in~\cite{Asada2023Direct}.

The main contributions are as follows.
\begin{enumerate}

\item[(i)] We build nested Galerkin subspaces for the continuous-time actuator-augmented system that keep the actuator coordinates separate from the reduced observables. The construction requires no specific dictionary family and no invariant subspace of observables. The actuator block and the constant input matrix are preserved, and the finite-order nonclosure is kept as an exact additive residual for nonlinear dynamics $\dot{x}=f(x,u)$ including non-control-affine systems. Including the physical state coordinates in the lifted model gives an exact state readout. The bounds in (iv) then apply directly to the original state variables.

\item[(ii)] We then bound the closure residual, actuator-defect coupling, and the output-reconstruction error uniformly over the operating region. Exact linear constraints on the residual can be used to construct structured ellipsoidal enclosures when available. We express the Hilbert norm of the closure residual through best-approximation errors. We also derive a finite-order uniform bound.

\item[(iii)] We use the residual bounds to form an outer LTI system with bounded disturbance channels that contains every lifted nonlinear trajectory while the physical states stay in the operating region. Controller synthesis for a fixed realization is then a standard $\mathcal{H}_\infty$ output-feedback problem as in~\cite{DGKF1989}.

\item[(iv)] We obtain a quadratic storage function to bound the physical state and the tracking error explicitly before the first exit. If the physical-state image of the closed-loop ellipsoid, or an auxiliary bound on the physical state, lies inside the operating region, the trajectory remains there and is forward complete, with the physical state and tracking error uniformly ultimately bounded.

\end{enumerate}
\emph{Relation to prior work. }
Koopman direct encoding~\cite{Asada2023Direct} and control-coherent Koopman modeling~\cite{Asada2024,Asada2025} provide two ideas used in forming the nominal LTI part of our realization. At every retained order, we preserve the actuator block and the constant input matrix. What is added here is an explicit exact additive residual for finite-order drift nonclosure. The residual depends on the plant and actuator states, not on the control input of the actuator. Thus, neither a bilinear representation nor a control-affine plant is required. The LTI limitations in~\cite{Heeg2026} do not preclude this construction, because exactness is required of the LTI-plus-residual decomposition, not of its LTI part.

The bounds in~\cite{Nuske2023,Koehne2025,Bold2025} concern approximation errors in data-driven models. For a fixed dictionary, projection error can remain even in the infinite-data limit. We bound this finite-order closure error deterministically from the model, with attained maxima over the operating region.  Here, we work in the known-model setting. If such a model is not available, it can be identified from data~\cite{kurkccu2018mu} or built by combining physical modeling with data~\cite{schoukens2019nonlinear}. These extensions are outside the scope of this paper and would also require accounting for model error to establish guarantees for the true system.

For control-affine systems, \cite{Strasser2025TAC,Strasser2025EJC} combine bilinear models and error bounds with LMI or sum-of-squares state-feedback design. Our controller requires neither a control-affine plant nor full-state measurement. The controller design is conventional centralized $\mathcal{H}_\infty$ output feedback with frequency-domain weighting to shape tracking performance. Our residual bounds need not vanish at the equilibrium, and the closed-loop result is uniform ultimate boundedness rather than exponential stability.

For output feedback, the approach in~\cite{Strasser2026io} uses delay-embedded input--output states for discrete-time systems under a uniform nonlinear observability condition. In contrast, our method uses lifting only offline, and the feedback is a continuous-time linear dynamic controller driven solely by the current tracking error.

Sections~\ref{sec:prelim}--\ref{sec:lifting} develop the Galerkin
realization and residual analysis. Section~\ref{sec:control} establishes
the output-feedback guarantees. Section~\ref{sec:numerics} presents
regional certificates and comparisons with Jacobian-based
$\mathcal H_\infty$ feedback, backstepping, and Koopman-based hybrid MPC.
Section~\ref{sec:conclusion} concludes the article.

\section{Preliminaries}
\label{sec:prelim}
\label{sec:galerkin}
Let $X\subset X_{\mathrm{ext}}\subset\R^n$,
$U\subset U_{\mathrm{ext}}\subset\R^m$, and
$P\subset P_{\mathrm{ext}}\subset\R^\ell$, where
$X_{\mathrm{ext}}$, $U_{\mathrm{ext}}$, and $P_{\mathrm{ext}}$ are open.
Let
\[
f\in C(X_{\mathrm{ext}}\times U_{\mathrm{ext}},\R^n),
\
h\in C(X_{\mathrm{ext}},\R^{n_y}).
\]
We consider the class of nonlinear systems described by
\begin{equation}
\dot x=f(x,u),\ y=h(x),
\label{eq:plant}
\end{equation}
where $x\in X$ and $u\in U$ are the state and input, respectively. We also consider the following actuator-augmented representation as
\begin{equation}
\dot p=A_p p+B_pu_a+\delta_p,\ u=C_pp.
\label{eq:actuator}
\end{equation}
where $p\in P$ is the actuator state, $u_a\in\R^m$ is the actuator input,
and $\delta_p$ is the actuator-realization defect.
Here $A_p\in\R^{\ell\times\ell}$, $B_p\in\R^{\ell\times m}$,
and $C_p\in\R^{m\times\ell}$. We choose the physical
actuator-state set $P$ such that
$C_pP\subseteq U$. We assume
\begin{equation}
C_pB_p=0.
\label{eq:CpBp}
\end{equation}
\begin{remark}
\label{rem:CpBp-sufficient}
Condition~\eqref{eq:CpBp} is sufficient, but not necessary, for the input-coherence property used below. It can be satisfied, for instance, by a second-order low-pass filter, if needed, as a prefilter similar to~\cite{Asada2025}.
\end{remark}

Substituting $u$ from \eqref{eq:actuator} into
\eqref{eq:plant} gives
\begin{equation}
    \dot x=f(x,C_pp).
    \label{eq:plant-augmented}
\end{equation}

For an open set $O\subset\R^d$, $q\in\N$, and
$k\in\N\cup\{0\}$, let $C^k(O,\R^q)$ denotes the space of
$k$-times continuously differentiable maps from $O$ to $\R^q$.

If $\varphi\in C^1(O,\R)$ and $\vartheta\in O$, then
$D\varphi(\vartheta)\in\R^{1\times d}$ denotes the row-vector
representation of its Fr\'echet derivative at $\vartheta$ \cite[Sec.~4.2, Def.~1]{zeidler21995applied}. For product domains we use the corresponding partial derivatives
$D_x\varphi$, $D_u\varphi$, and $D_p\varphi$.

If $\mathbf F=(F_1,\dots,F_q)^\top\in C^1(O,\R^q)$, then
$D\mathbf F(\vartheta)\in\R^{q\times d}$ denotes the Jacobian matrix whose $i$-th row is $DF_i(\vartheta)$. For product domains, we use $D_x\mathbf F$, $D_u\mathbf F$, and $D_p\mathbf F$ for the corresponding partial Jacobians. If $\vartheta=(x,u)$, then $D_x\mathbf F(x,u)\in\R^{q\times n}$ and $D_u\mathbf F(x,u)\in\R^{q\times m}$. If $\vartheta=(p,x)$, then
$D_p\mathbf F(p,x)\in\R^{q\times\ell}$ and $D_x\mathbf F(p,x)\in\R^{q\times n}$.
\begin{definition}
\label{def:concrete-hilbert}
Let $\Omega\subset P_{\mathrm{ext}}\times X_{\mathrm{ext}}$ be compact, with $C_pp\in U_{\mathrm{ext}}$ for every $(p,x)\in\Omega$, and
let $\mu$ be a finite positive Borel measure with full support on
$\Omega$ (i.e., $\supp\mu=\Omega$). Set $\mathcal H=L^2(\Omega,\mu)$,
with inner product and norm
\[
\langle f,g\rangle_{\mathcal H}
\coloneqq\int_\Omega fg\,\mathrm d\mu,
\qquad
\|f\|_{\mathcal H}\coloneqq\langle f,f\rangle_{\mathcal H}^{1/2}.
\]
\end{definition}
\label{conv:continuous-representatives}
We identify continuous functions on $\Omega$ with their $L^2(\mu)$ classes. Since $\mu$ has full support, continuous representatives of the same class are equal pointwise on $\Omega$.
We use the following notation throughout. For a real Hilbert space
$\mathscr H$ and a closed subspace $V\subset\mathscr H$,
$\Pi_V:\mathscr H\to V$ denotes the orthogonal projection,
$\res_V\coloneqq I-\Pi_V$ the associated residual operator, and
$V^\perp$ the orthogonal complement. For $g\in\mathscr H$, set
\[
\dist_{\mathscr H}(g,V)
\coloneqq
\inf_{v\in V}\|g-v\|_{\mathscr H}
=
\|\res_Vg\|_{\mathscr H}.
\] 
The product space
$\mathscr H^m$ is equipped with the canonical inner product
\[
\langle \mathbf f,\mathbf g\rangle_{\mathscr H^m}
=
\sum_{i=1}^{m}\langle f_i,g_i\rangle_{\mathscr H},
\]
and the induced norm
$\|\mathbf f\|_{\mathscr H^m}
=(\sum_{i=1}^m\|f_i\|_{\mathscr H}^2)^{1/2}$.
If $E,F\subset\mathscr H$ satisfy $E\cap F=\{0\}$, then
$E\oplus F$ denotes the algebraic direct sum $E+F$.
Equivalently, every element of $E\oplus F$ has a unique
representation $e+f$, with $e\in E$ and $f\in F$
\cite[I, Section~4]{lang1987linear}.
If $\mathbf F=(F_1,\ldots,F_q)^\top\in C(\Omega,\R^q)$, then
$\mathbf F$ denotes the function-valued vector and
$\mathbf F(\vartheta)\in\R^q$ its value at $\vartheta\in\Omega$. 

For $g\in C(\Omega,\R)$, define $\|g\|_{L^\infty(\Omega)}
\coloneqq \max_{\vartheta\in\Omega}|g(\vartheta)|$.

Finite-dimensional vectors are equipped with the Euclidean norm $\|\cdot\|_2$, and real matrices with the induced norm
\mbox{$\|\cdot\|_{2\to2}$}. If
$T:C(\Omega,\R)\to C(\Omega,\R)$ is bounded and linear, then
\mbox{$\|T\|_{C(\Omega)\to C(\Omega)}$} denotes the operator norm induced by
\mbox{$\|\cdot\|_{L^\infty(\Omega)}$}.

Functions depending only on a subset of the coordinates of $\Omega$
are identified with their natural extensions to $\Omega$. In particular, for
$g=g(x)$ defined on $X_{\mathrm{ext}}$,
$(\iota_xg)(p,x):=g(x)$ for $(p,x)\in\Omega$.
The same convention applies component-wise on vector-valued functions and the embedding notation is omitted whenever no ambiguity arises.
The following Galerkin projection formulas define the finite-dimensional matrices used below. If $V\subset\mathscr H$ is a
closed subspace, then $V^m\subset\mathscr H^m$ is closed and
\[
\begin{aligned}
\Pi_{V^m}\mathbf g&=(\Pi_Vg_1,\ldots,\Pi_Vg_m)^\top,\\
\res_{V^m}\mathbf g&=(\res_Vg_1,\ldots,\res_Vg_m)^\top.
\end{aligned}
\]

\begin{definition}
\label{def:QR}
Let $\mathscr H$ be a real Hilbert space. Given
$\mathbf F=(f_1,\dots,f_q)^\top\in\mathscr H^q$ and
$\mathbf G=(g_1,\dots,g_m)^\top\in\mathscr H^m$, set
$V\coloneqq\Span\{f_1,\dots,f_q\}\subset\mathscr H$.
Define the Gram matrix and cross-Gram matrix by
\begin{equation}
\begin{aligned}
R(\mathbf F)
\coloneqq
\bigl(\langle f_i,f_j\rangle_{\mathscr H}\bigr)_{i,j=1}^q
\in\R^{q\times q},
\label{eq:R-general}
\end{aligned}
\end{equation}
and
\begin{equation}
\begin{aligned}
Q(\mathbf G,\mathbf F)
\coloneqq
\bigl(\langle g_i,f_j\rangle_{\mathscr H}\bigr)_{\substack{i=1,\dots,m\\ j=1,\dots,q}}
\in\R^{m\times q}.
\label{eq:Q-general}
\end{aligned}
\end{equation}
\end{definition}

\begin{lemma}
\label{lem:gram-invertible}
\label{thm:general-proj}
Let $\mathscr H$ be a real Hilbert space and
$\mathbf F=(f_1,\dots,f_q)^\top\in\mathscr H^q$.
Then $R(\mathbf F)\succ0$ if and only if $f_1,\dots,f_q$ are linearly
independent in $\mathscr H$. In this case, $V=\Span\{f_1,\dots,f_q\}$
is closed and, for every $\mathbf G\in\mathscr H^m$,
\begin{equation}
\Pi_{V^m}\mathbf G
=
Q(\mathbf G,\mathbf F)R(\mathbf F)^{-1}\mathbf F.
\label{eq:general-proj}
\end{equation}
\end{lemma}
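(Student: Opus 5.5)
The proof has three parts: the positive-definiteness criterion from the quadratic form of $R(\mathbf F)$, closedness of $V$ from finite-dimensionality, and the projection formula from the normal equations applied componentwise. Throughout, the symmetry $R(\mathbf F)=R(\mathbf F)^\top$ follows from the symmetry of the real inner product.

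\emph{Positive definiteness.} For $c\in\R^q$, bilinearity gives
\[
c^\top R(\mathbf F)c=\sum_{i,j}c_ic_j\langle f_i,f_j\rangle_{\mathscr H}=\Bigl\|\sum_{i=1}^q c_if_i\Bigr\|_{\mathscr H}^2\ge0 ,
\]
so $R(\mathbf F)\succeq0$ always. Hence $R(\mathbf F)\succ0$ if and only if $c^\top R(\mathbf F)c=0$ forces $c=0$. By the identity above, that is the same as saying $\sum_i c_if_i=0$ forces $c=0$, which is linear independence. Both directions come out of this one identity.

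\emph{Closedness.} $V$ is finite-dimensional. Every finite-dimensional subspace of a normed space is complete, because all norms on $\R^q$ are equivalent, and a complete subspace is closed. Consequently $\Pi_V$ is well defined, and so is $\Pi_{V^m}$, which acts componentwise as recorded before Definition~\ref{def:QR}.

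\emph{Projection formula.} By componentwise action it suffices to take $m=1$ and a single $g\in\mathscr H$, then stack the rows. Write $\Pi_Vg=\sum_j a_jf_j$ with a row vector $a\in\R^{1\times q}$. The projection is characterized by orthogonality: $g-\Pi_Vg\perp f_k$ for every $k$. This gives the normal equations
\[
\langle g,f_k\rangle_{\mathscr H}=\sum_j a_j\langle f_j,f_k\rangle_{\mathscr H},
\]
that is, $Q(g,\mathbf F)=aR(\mathbf F)$, using the symmetry of $R$. Since $R(\mathbf F)$ is invertible, $a=Q(g,\mathbf F)R(\mathbf F)^{-1}$. Conversely, the element defined by this $a$ lies in $V$ and satisfies the orthogonality conditions, so by uniqueness of the orthogonal projection it equals $\Pi_Vg$. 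Stacking over $g_1,\dots,g_m$ yields the rows of $Q(\mathbf G,\mathbf F)R(\mathbf F)^{-1}\mathbf F$, which is \eqref{eq:general-proj}.

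There is no real obstacle here. The only points needing care are the row and column conventions, namely that $Q$ is indexed $(i,j)=\langle g_i,f_j\rangle$, and remembering to invoke the symmetry of $R$ when transposing the normal equations.
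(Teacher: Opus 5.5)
Your proof is correct and takes the same route as the paper: the identity $c^\top R(\mathbf F)c=\|\sum_i c_if_i\|_{\mathscr H}^2$ gives the definiteness criterion, closedness follows from finite dimensionality, and the projection formula comes from the Gram normal equations applied componentwise. One small remark: you do not need the symmetry of $R(\mathbf F)$ to obtain $Q(g,\mathbf F)=aR(\mathbf F)$, since $(aR(\mathbf F))_k=\sum_j a_j\langle f_j,f_k\rangle_{\mathscr H}$ already matches $\langle g,f_k\rangle_{\mathscr H}$, but invoking it does no harm.
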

\begin{proof}
For $a\in\R^q$, $a^\top R(\mathbf F)a=\|\sum_i a_if_i\|_{\mathscr H}^2$,
which proves the first claim \cite[Theorem~6.5-1]{kreyszig1978introductory}.
Finite-dimensional subspaces are closed. The projection formula follows
componentwise from the Gram normal equations
\cite[Ch.~3, p.~80]{arbogastbona}.
\end{proof}
For
$\langle f,g\rangle_{\mathcal H}
=\int_\Omega fg\,\mathrm d\mu$,
the Gram and cross-Gram matrices take the form
\[
R(\mathbf F)
=\int_\Omega \mathbf F\mathbf F^\top\,\mathrm d\mu,\quad
Q(\mathbf G,\mathbf F)
=\int_\Omega \mathbf G\mathbf F^\top\,\mathrm d\mu.
\]

\section{Control-coherent finite-dimensional Galerkin lifting with additive uncertainty}
\label{sec:lifting}
In this section we develop a Galerkin construction whose main result is an exact finite-dimensional LTI-plus-residual decomposition that preserves the actuator block and constant input matrix at every retained order. We first introduce the actuator-composed lift family and the regularity condition used, together with \eqref{eq:CpBp}, to establish this result.

Let
$\{\psi_k\}_{k\ge1}\subset
C^1(X_{\mathrm{ext}}\times U_{\mathrm{ext}},\R)$
be a prescribed family of scalar lift functions. For each $k\ge1$, define the raw actuator-composed lift on $\Omega$ by
\begin{equation}
\phi_k^{\mathrm{raw}}(p,x)
\coloneqq
\psi_k(x,C_pp),
\label{eq:raw-lift}
\end{equation}
and its nominal Lie derivative
\begin{equation}
\begin{aligned}
\gamma_k^{\mathrm{raw}}(p,x)
&\coloneqq D_x\psi_k(x,C_pp)f(x,C_pp)\\
&\quad+D_u\psi_k(x,C_pp)C_pA_pp.
\label{eq:raw-gamma}
\end{aligned}
\end{equation}
Define also the actuator coordinate functions
\begin{equation}
\pi_i(p,x)\coloneqq p_i, \ \textit{for }
i=1,\dots,\ell.
\label{eq:pi-scalar}
\end{equation}
Since $\psi_k\in C^1$ and $f$ is continuous, the raw lifts and their nominal Lie derivatives are continuous on $\Omega$. To use the actuator coordinates as basis functions, we make the following assumption.
\begin{assumption}
\label{ass:CCK-membership}
The functions $\pi_1,\dots,\pi_\ell$ are linearly independent in
$\mathcal H$.
\end{assumption}
Since $\mu$ has full support, the previous assumption is equivalent to requiring that $a^\top p=0$ on $\Omega$ imply $a=0$. In other words, no nonzero linear combination of the actuator coordinates vanishes on $\Omega$. This holds whenever $\Omega$ has nonempty interior. Accordingly, define the actuator-coordinate subspace
\[
U_p
\coloneqq
\Span\{\pi_1,\dots,\pi_\ell\}
\subset\mathcal H.
\]

To fix the representatives used in the derivative calculations below,
define
\[
\Xi(p,x)\coloneqq(x,C_pp),
\\
\mathcal O
\coloneqq
\Xi^{-1}(X_{\mathrm{ext}}\times U_{\mathrm{ext}})
\subset P_{\mathrm{ext}}\times X_{\mathrm{ext}}.
\]
Since $\Xi$ is continuous, $\mathcal O$ is open. Moreover, the choice of $\Omega$ in Definition~\ref{def:concrete-hilbert} gives $\Omega\subset\mathcal O$. Thus $\mathcal O$ is an open neighborhood
of $\Omega$. Since $\psi_k\in C^1$ and $\Xi$ is smooth, each raw lift
has the canonical $C^1$ extension
\[
\phi_{k,\mathrm{ext}}^{\mathrm{raw}}(p,x)
\coloneqq
\psi_k(x,C_pp),
\\
(p,x)\in\mathcal O.
\]
For each reduced lift constructed below, we retain its defining finite
expansion
\[
\widehat\phi_k
=
\sum_{r=1}^{m_k}c_{kr}\phi_{n_{kr}}^{\mathrm{raw}}
\]
and use the same coefficients to define its extension on $\mathcal O$.
All derivatives of the raw and reduced lifts below are taken with
respect to these extensions.

\begin{theorem}
\label{thm:reduced-family}
Under Assumption~\ref{ass:CCK-membership}, the raw lift family can be
reduced to a nested family that is linearly independent of the
actuator-coordinate subspace. More precisely, there exist
$M\in\N\cup\{0,\infty\}$, a strictly increasing sequence of indices
$\{j_k\}_{k=1}^{M}\subset\N$ (empty if $M=0$), and an
$\mathcal H$-orthonormal family
$\{\widehat\phi_k\}_{k=1}^{M}\subset C(\Omega,\R)$ such that, for every
$N$ with $1\le N\le M$,
\begin{equation}
\begin{aligned}
V_N
\coloneqq
\Span\{\widehat\phi_1,\dots,\widehat\phi_N\}
=
\Span\{\phi_{j_1}^{\mathrm{raw}},\dots,\phi_{j_N}^{\mathrm{raw}}\}
\subset\mathcal H,
\label{eq:VCCK-WN}
\end{aligned}
\end{equation}
and $U_p\cap V_N=\{0\}$.
Consequently,
\begin{equation}
\begin{aligned}
W_N
\coloneqq
U_p\oplus V_N
=
U_p+\Span\{\phi_{j_1}^{\mathrm{raw}},\dots,\phi_{j_N}^{\mathrm{raw}}\},
\label{eq:WN-characterization}
\end{aligned}
\end{equation}
and $W_N\subset W_{N+1}$ whenever $1\le N<M$. In particular, each
$\widehat\phi_k$ is a finite real linear combination of raw lifts.

If $M=0$, then all raw lifts belong to $U_p$. If
$1\le M<\infty$, then
\begin{equation}
\begin{aligned}
U_p+\Span\{\phi_k^{\mathrm{raw}}:k\ge1\}
=
U_p\oplus V_M.
\label{eq:finite-M}
\end{aligned}
\end{equation}
Hence, if $\bigl(U_p+\Span\{\phi_k^{\mathrm{raw}}:k\ge1\}\bigr)/U_p$
is infinite-dimensional, then $M=\infty$.
\end{theorem}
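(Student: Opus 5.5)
The construction is a greedy selection followed by Gram--Schmidt orthonormalization carried out modulo $U_p$. By Assumption~\ref{ass:CCK-membership} and Lemma~\ref{lem:gram-invertible}, $U_p$ is a closed $\ell$-dimensional subspace, so $\Pi_{U_p}$ and $\res_{U_p}$ are well defined. Each raw lift $\phi_k^{\mathrm{raw}}$ is continuous on the compact set $\Omega$, hence lies in $\mathcal H$.

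We define $j_1<j_2<\cdots$ recursively. Let $j_1$ be the smallest index $k$ with $\phi_k^{\mathrm{raw}}\notin U_p$. Given $j_1,\dots,j_N$, let $j_{N+1}$ be the smallest index $k>j_N$ with
\[
\phi_k^{\mathrm{raw}}\notin U_p+\Span\{\phi_{j_1}^{\mathrm{raw}},\dots,\phi_{j_N}^{\mathrm{raw}}\}.
\]
Set $M$ to be the number of indices so produced, so $M\in\N\cup\{0,\infty\}$. By construction, the classes of $\phi_{j_1}^{\mathrm{raw}},\dots,\phi_{j_N}^{\mathrm{raw}}$ in $\mathcal H/U_p$ are linearly independent. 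Equivalently, $\pi_1,\dots,\pi_\ell,\phi_{j_1}^{\mathrm{raw}},\dots,\phi_{j_N}^{\mathrm{raw}}$ are linearly independent. This gives $U_p\cap\Span\{\phi_{j_i}^{\mathrm{raw}}\}_{i\le N}=\{0\}$: a nonzero common element would give a nontrivial vanishing combination.

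Next, the orthonormal family is obtained by applying ordinary Gram--Schmidt in $\mathcal H$ to $\phi_{j_1}^{\mathrm{raw}},\phi_{j_2}^{\mathrm{raw}},\dots$. These are linearly independent, being independent even modulo $U_p$. Gram--Schmidt preserves nested spans, which gives \eqref{eq:VCCK-WN}. Each $\widehat\phi_k$ is a finite real combination of raw lifts, hence continuous, and orthonormality holds by construction. The direct sum $W_N=U_p\oplus V_N$ then follows from $U_p\cap V_N=\{0\}$. The characterization \eqref{eq:WN-characterization} is immediate from \eqref{eq:VCCK-WN}, and nestedness $W_N\subset W_{N+1}$ follows from $V_N\subset V_{N+1}$. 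Note that we orthonormalize in $\mathcal H$ and not relative to $U_p$. Orthogonality to $U_p$ is not claimed, and the raw-lift expansion is what the later extension convention requires.

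The remaining claims rest on the following invariant, which I expect to be the main point requiring care: every raw lift $\phi_k^{\mathrm{raw}}$ with $k\le j_N$ lies in $U_p\oplus V_N$. I would prove it by induction. If $j_N<k<j_{N+1}$, then $\phi_k^{\mathrm{raw}}$ was skipped, which means it lies in $U_p\oplus V_N$. If $k=j_{N+1}$, it lies in $V_{N+1}$ by construction.

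If $M=0$, no index was ever selected, so every $\phi_k^{\mathrm{raw}}\in U_p$. If $1\le M<\infty$, the recursion stopped after $j_M$. Then every $k>j_M$ satisfies $\phi_k^{\mathrm{raw}}\in U_p\oplus V_M$, and by the invariant so does every $k\le j_M$. This yields the inclusion $\subseteq$ in \eqref{eq:finite-M}; the reverse inclusion is trivial.

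Finally, suppose $M<\infty$. By the previous paragraph, the quotient $(U_p+\Span\{\phi_k^{\mathrm{raw}}:k\ge1\})/U_p$ is isomorphic to $V_M$ and so has dimension at most $M$. Hence an infinite-dimensional quotient forces $M=\infty$.
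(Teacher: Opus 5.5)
Your proposal is correct and follows essentially the paper's construction. You select raw lifts greedily outside $U_p\oplus V_N$ and orthonormalize in $\mathcal H$ against $V_N$ only, not $U_p$, so each $\widehat\phi_k$ remains a combination of raw lifts; the only differences are cosmetic (selection before Gram--Schmidt, and $U_p\cap V_N=\{0\}$ obtained from independence in $\mathcal H/U_p$ rather than the paper's direct contradiction), though you should state the base case $\phi_k^{\mathrm{raw}}\in U_p$ for $k<j_1$ of your invariant explicitly.
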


\begin{proof}
If every raw lift belongs to $U_p$, set $M=0$ and the conclusion is
immediate. Otherwise, let $j_1$ be the smallest index such that
$\phi_{j_1}^{\mathrm{raw}}\notin U_p$ and set
\[
\widehat\phi_1
\coloneqq
\frac{\phi_{j_1}^{\mathrm{raw}}}
{\|\phi_{j_1}^{\mathrm{raw}}\|_{\mathcal H}}.
\]
Then $V_1=\Span\{\phi_{j_1}^{\mathrm{raw}}\}$ and
$U_p\cap V_1=\{0\}$.

Suppose inductively that $j_1<\cdots<j_N$ and
$\widehat\phi_1,\dots,\widehat\phi_N$ have been constructed so that
they are orthonormal,
\[
V_N
=
\Span\{\phi_{j_1}^{\mathrm{raw}},\dots,\phi_{j_N}^{\mathrm{raw}}\},
\\
U_p\cap V_N=\{0\}.
\]
If every $j>j_N$ satisfies $\phi_j^{\mathrm{raw}}\in U_p\oplus V_N$, the recursion terminates and
we set $M=N$. Otherwise, let $j_{N+1}>j_N$ be the smallest index for
which $\phi_{j_{N+1}}^{\mathrm{raw}}\notin U_p\oplus V_N$,
and define
\[
\widetilde\phi_{N+1}
\coloneqq
\phi_{j_{N+1}}^{\mathrm{raw}}
-
\Pi_{V_N}\phi_{j_{N+1}}^{\mathrm{raw}},
\\
\widehat\phi_{N+1}
\coloneqq
\frac{\widetilde\phi_{N+1}}
{\|\widetilde\phi_{N+1}\|_{\mathcal H}}.
\]
Since $\phi_{j_{N+1}}^{\mathrm{raw}}\notin V_N$, we have $\widetilde\phi_{N+1}\neq0$. Also, $\widetilde\phi_{N+1}\in V_N^\perp$, so the enlarged family is orthonormal. The new lift is continuous, being a finite combination of raw lifts. Moreover, the induction hypothesis
gives
\[
\Pi_{V_N}\phi_{j_{N+1}}^{\mathrm{raw}}
\in
\Span\{\phi_{j_1}^{\mathrm{raw}},\dots,\phi_{j_N}^{\mathrm{raw}}\},
\]
and therefore
\[
V_{N+1}
=
\Span\{\phi_{j_1}^{\mathrm{raw}},\dots,
       \phi_{j_{N+1}}^{\mathrm{raw}}\}.
\]

It remains to verify directness. If $u+v+a\widehat\phi_{N+1}=0,
u\in U_p,\ v\in V_N$, and $a\neq0$, then
$\widehat\phi_{N+1}\in U_p\oplus V_N$. Since
\[
\phi_{j_{N+1}}^{\mathrm{raw}}
=
\|\widetilde\phi_{N+1}\|_{\mathcal H}\widehat\phi_{N+1}
+
\Pi_{V_N}\phi_{j_{N+1}}^{\mathrm{raw}},
\]
this would imply
$\phi_{j_{N+1}}^{\mathrm{raw}}\in U_p\oplus V_N$, contradicting the
choice of $j_{N+1}$. Hence $a=0$, and then
$u=-v\in U_p\cap V_N=\{0\}$. Thus
$U_p\cap V_{N+1}=\{0\}$. The nesting
$V_N\subset V_{N+1}$ immediately gives $W_N\subset W_{N+1}$.

If the construction never terminates, set $M=\infty$. If it stops
after $M<\infty$ steps, every selected raw lift belongs to $V_M$,
every skipped index belongs to the space available when it was skipped
and hence to $U_p\oplus V_M$, and every subsequent raw lift belongs to
$U_p\oplus V_M$ by the stopping criterion. This proves
\eqref{eq:finite-M}. A finite $M$ makes the corresponding quotient
finite-dimensional, so an infinite-dimensional quotient forces
$M=\infty$.
\end{proof}
This reduction gives a basis for $W_N$ that includes the actuator coordinates.
The recursive construction also implies that, for every
$1\le N\le M$,
\begin{equation}
\begin{aligned}
\phi_j^{\mathrm{raw}}\in W_N \quad \forall j \leq j_N
\label{eq:raw-prefix-in-WN}
\end{aligned}
\end{equation}
Indeed, each selected lift $\phi_{j_r}^{\mathrm{raw}}$ belongs to
$V_N\subset W_N$, while every unselected index between $j_r$ and $j_{r+1}$ was skipped because its raw lift already belonged to
$W_r\subset W_N$; indices below $j_1$ belong to $U_p\subset W_N$ by
the minimality of $j_1$.
\begin{remark}
\label{rem:reduced-family-order}

The finite-order spaces $V_N,W_N$ and their matrices and residuals
depend on the ordering of the raw lifts. This ordering may be chosen
to retain physically useful observables at low orders.

\end{remark}
\begin{definition}
\label{def:CCK-reduced}
Let $M\in\N\cup\{0,\infty\}$ and
$\{\widehat\phi_k\}_{k=1}^{M}$ be as in
Theorem~\ref{thm:reduced-family}. For $1\le N\le M$, define
\begin{equation}
\begin{aligned}
\mathbf\Phi_N
\coloneqq
\col(\widehat\phi_1,\dots,\widehat\phi_N)
\in C(\Omega,\R^N),
\label{eq:PhiN}
\end{aligned}
\end{equation}
\begin{equation}
\begin{aligned}
\bm\pi
\coloneqq
\col(\pi_1,\dots,\pi_\ell)
\in C(\Omega,\R^\ell),
\quad
\bm\pi(p,x)=p,
\label{eq:pi-vector}
\end{aligned}
\end{equation}
and
\begin{equation}
\begin{aligned}
\mathbf Z_N
\coloneqq
\col(\bm\pi,\mathbf\Phi_N)
\in C(\Omega,\R^{\ell+N}).
\label{eq:ZN}
\end{aligned}
\end{equation}

For $1\le k\le M$, define the nominal Lie derivative
\begin{equation}
\begin{aligned}
\widehat\gamma_k(p,x)
\coloneqq
D_p\widehat\phi_k(p,x)A_pp
+
D_x\widehat\phi_k(p,x)f(x,C_pp).
\label{eq:gammahat}
\end{aligned}
\end{equation}
For $1\le N\le M$, set
\begin{equation}
\begin{aligned}
\boldsymbol\gamma_N(p,x)
&\coloneqq
\col\bigl(
\widehat\gamma_1(p,x),\dots,\widehat\gamma_N(p,x)
\bigr)\\
&=
\underbrace{D_p\mathbf\Phi_N(p,x)}_{\textstyle =:\widehat J_N(p,x)\in\R^{N\times\ell}}A_pp\\
&\quad+D_x\mathbf\Phi_N(p,x)f(x,C_pp).
\end{aligned}
\label{eq:GammaCCK}
\end{equation}
Define also the augmented nominal derivative field
\begin{equation}
\begin{aligned}
\mathbf\Gamma_N(p,x)
\coloneqq
\col\bigl(
A_p\bm\pi(p,x),
\boldsymbol\gamma_N(p,x)
\bigr)
\in\R^{\ell+N}.
\label{eq:Upsilon}
\end{aligned}
\end{equation}
\end{definition}

\begin{lemma}
\label{lem:lower-block-exactB}
For each $k$ with $1\le k\le M$,
$\widehat\gamma_k\in C(\Omega,\R)$ and
\begin{equation}
\begin{aligned}
D_p\widehat\phi_k(p,x)B_p=0
\ \text{for all }(p,x)\in\Omega.
\label{eq:exactB-scalar}
\end{aligned}
\end{equation}
Consequently, for every $1\le N\le M$, $D_p\mathbf\Phi_N(p,x)B_p=0$
on $\Omega$, and $\boldsymbol\gamma_N$ and $\mathbf\Gamma_N$ are continuous.
\end{lemma}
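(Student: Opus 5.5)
The plan is to reduce everything to the raw lifts, using the finite expansion $\widehat\phi_k=\sum_{r=1}^{m_k}c_{kr}\phi_{n_{kr}}^{\mathrm{raw}}$ from Theorem~\ref{thm:reduced-family}. By convention, derivatives of $\widehat\phi_k$ are those of its extension on $\mathcal O$, which uses the same coefficients. Since differentiation is linear, $D_p\widehat\phi_k=\sum_r c_{kr}D_p\phi^{\mathrm{raw}}_{n_{kr},\mathrm{ext}}$, and likewise for $D_x$. So both claims, continuity and the identity \eqref{eq:exactB-scalar}, only need to be checked for a single raw lift $\phi_{j,\mathrm{ext}}^{\mathrm{raw}}(p,x)=\psi_j(x,C_pp)=\psi_j\circ\Xi(p,x)$.

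For a raw lift, I would apply the chain rule to $\psi_j\circ\Xi$ on the open set $\mathcal O$. Since $\Xi$ is linear with $D_p\Xi=(0,C_p)$ and $D_x\Xi=(I,0)$, this gives
\[
D_p\phi_{j,\mathrm{ext}}^{\mathrm{raw}}(p,x)=D_u\psi_j(x,C_pp)\,C_p,\qquad
D_x\phi_{j,\mathrm{ext}}^{\mathrm{raw}}(p,x)=D_x\psi_j(x,C_pp).
\]
Multiplying the first identity on the right by $B_p$ and using \eqref{eq:CpBp} gives $D_p\phi_{j,\mathrm{ext}}^{\mathrm{raw}}B_p=D_u\psi_j\,C_pB_p=0$ on $\mathcal O\supset\Omega$. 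Summing over the expansion then yields \eqref{eq:exactB-scalar}. Stacking the rows for $k=1,\dots,N$ gives $D_p\mathbf\Phi_N B_p=0$.

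For continuity, I would use that $\psi_j\in C^1$, so $D_u\psi_j$ and $D_x\psi_j$ are continuous. Composing with the continuous map $\Xi$ gives continuous functions of $(p,x)$ on $\mathcal O$. Together with the continuity of $f$ and of $(p,x)\mapsto A_pp$ and $(p,x)\mapsto C_pp$, each term of \eqref{eq:gammahat} is then a product and composition of continuous maps, and so is continuous on $\Omega$. As a consistency check, for raw lifts this recovers $\gamma^{\mathrm{raw}}_j$ in \eqref{eq:raw-gamma}, so $\widehat\gamma_k=\sum_r c_{kr}\gamma^{\mathrm{raw}}_{n_{kr}}$. Continuity of $\boldsymbol\gamma_N$ then follows componentwise. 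For $\mathbf\Gamma_N$ it follows because its remaining block $A_p\bm\pi(p,x)=A_pp$ is linear.

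The only delicate point is well-definedness. The derivatives must be taken with respect to the stated extensions on the open set $\mathcal O$, not the restrictions to the compact set $\Omega$, on which Fréchet derivatives need not make sense. The paper's convention fixing the representatives and the coefficients $c_{kr}$ already handles this, so I expect no real obstacle: the argument is essentially the chain rule plus $C_pB_p=0$.
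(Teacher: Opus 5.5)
Your proposal is correct and follows essentially the same route as the paper. Both arguments reduce to the defining finite expansion and use the chain rule through $\Xi$ to get $D_p\phi^{\mathrm{raw}}=D_u\psi\,C_p$ and $D_x\phi^{\mathrm{raw}}=D_x\psi$. From this you identify $\widehat\gamma_k=\sum_r c_{kr}\gamma^{\mathrm{raw}}_{n_{kr}}$ to get continuity, eliminate the $B_p$ term using $C_pB_p=0$, and stack to obtain the vector statements.
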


\begin{proof}
Fix $k$ and use its defining finite expansion
\[
\widehat\phi_k
=
\sum_{r=1}^{m_k}c_{kr}\phi_{n_{kr}}^{\mathrm{raw}}.
\]
For each raw lift,
\[
\begin{aligned}
D_p\phi_{n_{kr}}^{\mathrm{raw}}(p,x)&=D_u\psi_{n_{kr}}(x,C_pp)C_p,\\
D_x\phi_{n_{kr}}^{\mathrm{raw}}(p,x)&=D_x\psi_{n_{kr}}(x,C_pp).
\end{aligned}
\]
Hence
\[
D_p\phi_{n_{kr}}^{\mathrm{raw}}A_pp
+
D_x\phi_{n_{kr}}^{\mathrm{raw}}f(x,C_pp)
=
\gamma_{n_{kr}}^{\mathrm{raw}},
\]
and therefore, by linearity,
\[
\widehat\gamma_k
=
\sum_{r=1}^{m_k}c_{kr}\gamma_{n_{kr}}^{\mathrm{raw}}
\]
is continuous on $\Omega$ by the stated regularity assumptions. Moreover,
\[
D_p\widehat\phi_k(p,x)B_p
=
\sum_{r=1}^{m_k}
c_{kr}D_u\psi_{n_{kr}}(x,C_pp)C_pB_p
=
0
\]
by \eqref{eq:CpBp}, proving \eqref{eq:exactB-scalar}.
Stacking gives the claimed Jacobian identity and continuity of
$\boldsymbol\gamma_N$ and $\mathbf\Gamma_N$.
\end{proof}
Fix $N$ with $1\le N\le M$. Define the Gram and cross-Gram matrices
\begin{equation}
\begin{aligned}
R_N
\coloneqq
R(\mathbf Z_N)
\in\R^{(\ell+N)\times(\ell+N)},
\label{eq:RCCK}
\end{aligned}
\end{equation}
\begin{equation}
\begin{aligned}
Q_N
\coloneqq
Q(\mathbf\Gamma_N,\mathbf Z_N)
\in\R^{(\ell+N)\times(\ell+N)}.
\label{eq:QCCK}
\end{aligned}
\end{equation}
By~\eqref{eq:actuator} and Lemma~\ref{lem:lower-block-exactB}, the lifted input matrix is
\begin{equation}
\begin{aligned}
B_N
\coloneqq
\begin{bmatrix}
B_p\\
0_{N\times m}
\end{bmatrix}
\in\R^{(\ell+N)\times m}.
\label{eq:BCCK}
\end{aligned}
\end{equation}

By Assumption~\ref{ass:CCK-membership} and
Theorem~\ref{thm:reduced-family}, the components of $\mathbf Z_N$
form a basis of $W_N=U_p\oplus V_N$. Thus
$R_N=R(\mathbf Z_N)$ is invertible by
Lemma~\ref{lem:gram-invertible}.
Accordingly, define the Galerkin matrix
\begin{equation}
\begin{aligned}
A_N
\coloneqq
Q_NR_N^{-1}
\in\R^{(\ell+N)\times(\ell+N)}.
\label{eq:GCCK}
\end{aligned}
\end{equation}
Since $\mathbf\Gamma_N$ is continuous by
Lemma~\ref{lem:lower-block-exactB} and the components of
$\mathbf Z_N$ form a basis of $W_N$,
Lemma~\ref{thm:general-proj} gives
\[
\Pi_{W_N^{\ell+N}}\mathbf\Gamma_N
=
Q_NR_N^{-1}\mathbf Z_N
=
A_N\mathbf Z_N.
\]
For a locally well-posed nominal flow, the Lie derivatives in
$\mathbf\Gamma_N$ give the action of the Koopman
generator~\cite{Klus2020}. Thus, $A_N$ is the corresponding
Galerkin matrix.

The first $\ell$ components of $\mathbf\Gamma_N$ are those of
$A_p\bm\pi$ and belong to $U_p\subset W_N$, so componentwise
projection leaves them unchanged. Thus
\[
(A_N\mathbf Z_N)_{1:\ell}=A_p\bm\pi.
\]
Writing
\[
A_N=
\begin{bmatrix}
A_{11,N} & A_{12,N}\\
A_{21,N} & A_{22,N}
\end{bmatrix}
\]
conformably with $\mathbf Z_N=\col(\bm\pi,\mathbf\Phi_N)$ gives
\[
(A_{11,N}-A_p)\bm\pi+A_{12,N}\mathbf\Phi_N=0.
\]
Since the components of $\mathbf Z_N$ are linearly independent,
$A_{11,N}=A_p$ and $A_{12,N}=0$. Therefore
\begin{equation}
A_N=
\begin{bmatrix}
A_p & 0_{\ell\times N}\\
A_{21,N} & A_{22,N}
\end{bmatrix},
\label{eq:GCCK-block}
\end{equation}
where $A_{21,N}\in\R^{N\times\ell}$ and $A_{22,N}\in\R^{N\times N}$.

\begin{theorem}
\label{thm:main-CCK}
Fix a reduced order $N$ with $1\le N\le M$. Define the closure
residual field by
\begin{equation}
\begin{aligned}
\mathbf r_N
\coloneqq
\res_{W_N^N}\boldsymbol\gamma_N
\in C(\Omega,\R^N).
\label{eq:rCCK}
\end{aligned}
\end{equation}
\begin{enumerate}[label=\textnormal{(\roman*)}]
\item
The following field identity holds pointwise on $\Omega$:
\begin{equation}
\mathbf\Gamma_N
=
A_N\mathbf Z_N
+
\col\bigl(0_\ell,\mathbf r_N\bigr).
\label{eq:Upsilon-split}
\end{equation}
    \item if $I\subset\R$ is an interval and
    $(p(\cdot),x(\cdot),u_a(\cdot))
    \in C^1(I;P\times X\times\R^m)$ satisfies
\begin{equation}
\begin{aligned}
\dot p(t)
&= A_pp(t)+B_pu_a(t)+\delta_p(t),\\
\dot x(t)
&= f\bigl(x(t),C_pp(t)\bigr), \ (p(t),x(t))
\in\Omega
\end{aligned}
\label{eq:trajectory-CCK}
\end{equation}
    for all $t\in I$, then the evaluated lifted state
\begin{equation}
\begin{aligned}
    \zeta_N(t)
    \coloneqq
    \mathbf Z_N(p(t),x(t))
    \in\R^{\ell+N}
    \label{eq:zeta}
\end{aligned}
\end{equation}
   satisfies
\begin{equation}
\dot\zeta_N(t)
=
A_N\zeta_N(t)
+
B_Nu_a(t)
+
\Delta_N(t)
\label{eq:main-CCK-ODE}
\end{equation}
for all $t\in I$, where
\begin{equation}
\begin{aligned}
    \Delta_N(t)
    \coloneqq
    \begin{bmatrix}
    \delta_p(t)\\
    \mathbf r_N(p(t),x(t))
    +
    \widehat J_N(p(t),x(t))\delta_p(t)
    \end{bmatrix}.
    \label{eq:Delta}
\end{aligned}
\end{equation}
\end{enumerate}
\end{theorem}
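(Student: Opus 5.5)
For part (i), I will work componentwise. The decomposition $\mathbf\Gamma_N=\Pi_{W_N^{\ell+N}}\mathbf\Gamma_N+\res_{W_N^{\ell+N}}\mathbf\Gamma_N$ holds in $\mathcal H^{\ell+N}$. The projection formula already established via Lemma~\ref{thm:general-proj} gives $\Pi_{W_N^{\ell+N}}\mathbf\Gamma_N=A_N\mathbf Z_N$.

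For the residual term, I split it conformably. The first $\ell$ components are $A_p\bm\pi\in U_p\subset W_N$, so their residual vanishes. The last $N$ components are $\res_{W_N}$ applied to the components of $\boldsymbol\gamma_N$, which is exactly $\mathbf r_N$ by \eqref{eq:rCCK}. This yields \eqref{eq:Upsilon-split} as an identity of $L^2(\mu)$ classes.

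To upgrade it to a pointwise identity, I note that $\mathbf r_N=\boldsymbol\gamma_N-(A_N\mathbf Z_N)_{\ell+1:\ell+N}$. Since $\boldsymbol\gamma_N$ is continuous by Lemma~\ref{lem:lower-block-exactB} and $\mathbf Z_N$ is continuous, $\mathbf r_N$ has a continuous representative. This justifies $\mathbf r_N\in C(\Omega,\R^N)$. Both sides of \eqref{eq:Upsilon-split} are then continuous, and the full-support convention makes continuous representatives of the same class equal everywhere on $\Omega$. This step is the main subtlety: the whole argument hinges on reading $\mathbf r_N$ through this continuous representative and invoking the full-support convention, rather than on any computation.

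For part (ii), I will differentiate $\zeta_N(t)=\col(p(t),\mathbf\Phi_N(p(t),x(t)))$. The top block is immediate: $\dot p=A_pp+B_pu_a+\delta_p$, which is the first block of $A_N\zeta_N+B_Nu_a+\Delta_N$ because the first block row of $A_N$ is $[A_p\ 0]$ by \eqref{eq:GCCK-block}.

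For the bottom block, I use the $C^1$ extension of $\mathbf\Phi_N$ on the open set $\mathcal O\supset\Omega$ and apply the chain rule along the $C^1$ curve $t\mapsto(p(t),x(t))\in\Omega\subset\mathcal O$. This gives
\[
\tfrac{d}{dt}\mathbf\Phi_N
=D_p\mathbf\Phi_N\,(A_pp+B_pu_a+\delta_p)+D_x\mathbf\Phi_N\,f(x,C_pp).
\]
By Lemma~\ref{lem:lower-block-exactB}, the $B_pu_a$ term drops out. What remains is $\boldsymbol\gamma_N(p,x)+\widehat J_N(p,x)\delta_p$, using the definition \eqref{eq:GammaCCK}.

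Finally, I evaluate the pointwise identity from part (i) at $(p(t),x(t))$. Its lower block gives $\boldsymbol\gamma_N=A_{21,N}p+A_{22,N}\mathbf\Phi_N+\mathbf r_N$. Substituting this yields the second block of \eqref{eq:main-CCK-ODE}, with $\Delta_N$ as in \eqref{eq:Delta}. The lower block of $B_N$ is zero, which is consistent with the vanishing of the $B_pu_a$ term.
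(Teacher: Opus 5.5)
Your proposal is correct and follows the paper's proof essentially step for step. For (i) you use componentwise projection with the actuator block $A_p\bm\pi\in U_p\subset W_N$, upgraded to a pointwise identity by the full-support continuous-representative convention. For (ii) you apply the chain rule on the $C^1$ extension over $\mathcal O$, cancel $B_pu_a$ via $D_p\mathbf\Phi_NB_p=0$, and substitute the lower block of (i). Your explicit formula $\mathbf r_N=\boldsymbol\gamma_N-(A_N\mathbf Z_N)_{\ell+1:\ell+N}$ justifying continuity is a small clarification that the paper leaves implicit.
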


\begin{proof}
The componentwise projection computed above leaves $A_p\bm\pi$
unchanged and has lower-block residual $\mathbf r_N$ by
\eqref{eq:rCCK}. This gives \eqref{eq:Upsilon-split}.
The standing convention makes this identity pointwise on $\Omega$.

Now let $(p(\cdot),x(\cdot),u_a(\cdot))$ satisfy
\eqref{eq:trajectory-CCK}. The upper block is immediate:
\[
\frac{\mathrm d}{\mathrm dt}\bm\pi(p(t),x(t))
=
\dot p(t)
=
A_pp(t)+B_pu_a(t)+\delta_p(t).
\]
For the lower block, fix $k\in\{1,\dots,N\}$. Using the canonical
extensions on $\mathcal O$, the chain rule gives
\[
\begin{aligned}
\frac{\mathrm d}{\mathrm dt}
\widehat\phi_k(p(t),x(t))
&=
D_p\widehat\phi_k(p(t),x(t))\dot p(t)\\
&\quad +D_x\widehat\phi_k(p(t),x(t))\dot x(t).
\end{aligned}
\]
Substitution of~\eqref{eq:trajectory-CCK} and
Lemma~\ref{lem:lower-block-exactB} eliminates the $B_pu_a$ term and gives
\[
\frac{d}{dt}\widehat\phi_k(p,x)
=\widehat\gamma_k(p,x)+D_p\widehat\phi_k(p,x)\delta_p
\]
along the trajectory.
Stacking over $k=1,\dots,N$ yields
\begin{equation}
\begin{aligned}
\frac{\mathrm d}{\mathrm dt}
\mathbf\Phi_N(p(t),x(t))
=
\boldsymbol\gamma_N(p(t),x(t))
+
\widehat J_N(p(t),x(t))\delta_p(t).
\label{eq:lower-block}
\end{aligned}
\end{equation}
Evaluating \eqref{eq:Upsilon-split} along the trajectory identifies
the nominal projected part as $A_N\zeta_N(t)$. Combining the upper
block, the exact input term $B_Nu_a(t)$, and the uncertainty terms
from \eqref{eq:lower-block} proves
\eqref{eq:main-CCK-ODE}--\eqref{eq:Delta}.
\end{proof}

\begin{definition}
\label{cor:uniform-uncertainty}
Fix $1\le N\le M$. By continuity and compactness, the maxima
below are finite and attained. These are the smallest scalar bounds for the fixed dictionary,
coordinates, region, and measure. Define
\begin{equation}
\begin{aligned}
\rho_N^{\star}
&\coloneqq\max_{\vartheta\in\Omega}\|\mathbf r_N(\vartheta)\|_2,\\
L_N^{\star}
&\coloneqq\max_{\vartheta\in\Omega}\|\widehat J_N(\vartheta)\|_{2\to2},
\label{eq:rho-L}
\end{aligned}
\end{equation}
and, for $\delta\ge0$,
\begin{equation}
\sigma_{c,N}^{\star}(\delta)
\coloneqq
\max_{\substack{\vartheta\in\Omega\\d\in\R^\ell,\ \|d\|_2\le\delta}}
\|\mathbf r_N(\vartheta)+\widehat J_N(\vartheta)d\|_2.
\label{eq:sigma-c-s6}
\end{equation}
The triangle inequality gives the following bound:
\begin{equation}
\sigma_{c,N}^{\star}(0)=\rho_N^\star,
\qquad
\sigma_{c,N}^{\star}(\delta)\le\rho_N^\star+L_N^\star\delta.
\label{eq:sigma-c-properties}
\end{equation}
\end{definition}
\begin{remark}
\label{rem:weighted-region-dependence}
The Galerkin closure and residual are determined by the chosen
$(\Omega,\mu)$ and are therefore regional and measure weighted,
whereas the exact actuator blocks $A_p$ and $B_N$ are independent of
this weighting.
\end{remark}

\begin{corollary}
\label{cor:cck-readouts}
Fix $N$ with $1\le N\le M$.
\begin{enumerate}[label=\textnormal{(\roman*)}]

\item
Using the standing embedding convention, define
\begin{equation}
\begin{aligned}
C_{y,N}
\coloneqq
\left(
\int_\Omega h Z_N^\top\,d\mu
\right)R_N^{-1}
\mathrel{\in}\R^{n_y\times(\ell+N)}.
\label{eq:cck-output-readout}
\end{aligned}
\end{equation}
Set $v_{y,N}
\coloneqq
h-C_{y,N}\mathbf Z_N
\in C(\Omega,\R^{n_y})$.
Then
\begin{equation}
\begin{aligned}
h
=
C_{y,N}\mathbf Z_N+v_{y,N}
\label{eq:cck-output-readout-residual}
\end{aligned}
\end{equation}
pointwise on $\Omega$, and
\begin{equation}
\begin{aligned}
\nu_N^{\star}
\coloneqq
\max_{\vartheta\in\Omega}
\|v_{y,N}(\vartheta)\|_2
<\infty.
\label{eq:nu-star}
\end{aligned}
\end{equation}

\item
Suppose that, for every $i\in\{1,\dots,n\}$, there exist
$k_i\le j_N$ and $a_i\ne0$ such that
\begin{equation}
\begin{aligned}
\psi_{k_i}(x,u)=a_i x_i.
\label{eq:state-coordinate-retention}
\end{aligned}
\end{equation}
Then $x\in W_N^n$, and
\begin{equation}
\begin{aligned}
H_{x,N}
\coloneqq
Q(x,\mathbf Z_N)R_N^{-1}
\in\R^{n\times(\ell+N)}
\label{eq:HxN}
\end{aligned}
\end{equation}
satisfies
\begin{equation}
x = H_{x,N}\mathbf Z_N(p,x),
\quad \forall (p,x)\in\Omega.
\label{eq:exact-state-readout}
\end{equation}
Consequently, defining $\xi\coloneqq\col(p,x)$ and
\begin{equation}
\begin{aligned}
H_{\xi,N}
\coloneqq
\begin{bmatrix}
\begin{array}{cc}
I_\ell & 0_{\ell\times N}
\end{array}\\[1mm]
H_{x,N}
\end{bmatrix}
\in\R^{(\ell+n)\times(\ell+N)},
\label{eq:HxiN}
\end{aligned}
\end{equation}
one has
\begin{equation}
\begin{aligned}
\xi
=
H_{\xi,N}\mathbf Z_N(p,x),\ \forall(p,x)\in\Omega.
\label{eq:exact-augmented-state-readout}
\end{aligned}
\end{equation}
\end{enumerate}
\end{corollary}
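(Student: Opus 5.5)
For part (i), the plan is to note that $h$ is continuous on $X_{\mathrm{ext}}$ and that its embedding $\iota_x h$ is continuous on the compact set $\Omega$. Hence each component of $h$ lies in $\mathcal H$. By Assumption~\ref{ass:CCK-membership} and Theorem~\ref{thm:reduced-family}, the components of $\mathbf Z_N$ form a basis of $W_N$, so $R_N\succ0$ by Lemma~\ref{lem:gram-invertible}. That lemma then gives
\[
\Pi_{W_N^{n_y}}h=C_{y,N}\mathbf Z_N .
\]
The identity \eqref{eq:cck-output-readout-residual} holds by the definition of $v_{y,N}$; it is in fact just the decomposition $h=\Pi h+\res h$. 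Since $h$ and $\mathbf Z_N$ are continuous, $v_{y,N}$ is continuous on $\Omega$, and the identity is pointwise because both sides are continuous functions, by the full-support convention. Compactness of $\Omega$ and continuity of $\|v_{y,N}(\cdot)\|_2$ then show that the maximum $\nu_N^\star$ is attained and finite.

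For part (ii), the key step is the membership $x_i\in W_N$. Hypothesis \eqref{eq:state-coordinate-retention} gives
\[
\phi_{k_i}^{\mathrm{raw}}(p,x)=\psi_{k_i}(x,C_pp)=a_ix_i .
\]
Since $k_i\le j_N$, \eqref{eq:raw-prefix-in-WN} yields $\phi_{k_i}^{\mathrm{raw}}\in W_N$, and because $a_i\neq0$ we get $x_i=a_i^{-1}\phi_{k_i}^{\mathrm{raw}}\in W_N$. Thus $x\in W_N^n$, so the componentwise projection fixes it: $\Pi_{W_N^n}x=x$. Lemma~\ref{lem:gram-invertible} then gives $\Pi_{W_N^n}x=Q(x,\mathbf Z_N)R_N^{-1}\mathbf Z_N=H_{x,N}\mathbf Z_N$ as an identity in $\mathcal H^n$. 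Both sides are continuous on $\Omega$, so by the full-support convention the equality holds pointwise, which is \eqref{eq:exact-state-readout}.

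The augmented readout \eqref{eq:exact-augmented-state-readout} then follows by stacking. The first block is $[I_\ell\;0]\mathbf Z_N=\bm\pi(p,x)=p$, which holds by the definitions \eqref{eq:pi-vector} and \eqref{eq:ZN}. The second block is \eqref{eq:exact-state-readout}.

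The only subtle point is the passage from $L^2$ equality to pointwise equality, together with correctly invoking \eqref{eq:raw-prefix-in-WN}. I expect this to be the main obstacle, though it is a bookkeeping issue rather than a real difficulty. It is covered by the stated convention that continuous representatives are identified, so I will cite it explicitly.
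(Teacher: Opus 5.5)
Your proposal is correct and follows the paper's proof essentially step for step. For (i) you use continuity on the compact set $\Omega$, invertibility of $R_N$, and continuity of $v_{y,N}$; for (ii) you use \eqref{eq:raw-prefix-in-WN} to get $x_i\in W_N$, the projection formula of Lemma~\ref{thm:general-proj} in $\mathcal H^n$, the full-support convention to pass to pointwise equality, and $\bm\pi(p,x)=p$ for the augmented readout. One small point: \eqref{eq:cck-output-readout-residual} already holds pointwise by the definition of $v_{y,N}$, so the full-support convention is not needed there.
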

\begin{proof}
For \textnormal{(i)}, the integral in
\eqref{eq:cck-output-readout} is well defined because $h$ and
$\mathbf Z_N$ are continuous on the compact set $\Omega$ and $\mu$ is
finite, and $R_N$ is invertible as established above.
The definition of $v_{y,N}$ gives
\eqref{eq:cck-output-readout-residual} and
$v_{y,N}\in C(\Omega,\R^{n_y})$. 
For \textnormal{(ii)}, \eqref{eq:state-coordinate-retention} gives
\[
\phi_{k_i}^{\mathrm{raw}}(p,x)
=
\psi_{k_i}(x,C_pp)
=
a_i x_i.
\]
Since $k_i\le j_N$, \eqref{eq:raw-prefix-in-WN} implies
$\phi_{k_i}^{\mathrm{raw}}\in W_N$, and hence $x_i\in W_N$.
Thus $x\in W_N^n$. Since the components of $\mathbf Z_N$
form a basis of $W_N$, Lemma~\ref{thm:general-proj} gives
\[
x
=
\Pi_{W_N^n}x
=
Q(x,\mathbf Z_N)R_N^{-1}\mathbf Z_N
=
H_{x,N}\mathbf Z_N
\]
in $\mathcal H^n$. The standing convention gives
\eqref{eq:exact-state-readout} pointwise on $\Omega$.
Finally, \eqref{eq:exact-augmented-state-readout} follows from
$\bm\pi(p,x)=p$.
\end{proof}
We next express the finite-order closure residual through best
approximation over $W_N$ and give a uniform bound.

\begin{lemma}
\label{thm:closure-criterion}
For every $1\le N\le M$,
\begin{equation}
\begin{aligned}
\|\mathbf r_N\|_{\mathcal H^N}^2
=
\sum_{k=1}^N
\dist_{\mathcal H}^2(\widehat\gamma_k,W_N).
\label{eq:best-approx}
\end{aligned}
\end{equation}
\end{lemma}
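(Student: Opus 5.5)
The identity follows by unpacking the definitions of the product-space norm and the componentwise residual operator, so the plan is a short chain of equalities.

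First, recall from \eqref{eq:rCCK} that $\mathbf r_N=\res_{W_N^N}\boldsymbol\gamma_N$, where $W_N^N\subset\mathcal H^N$ is closed. It is closed because $W_N$ is finite-dimensional, hence closed by Lemma~\ref{lem:gram-invertible}, and a finite product of closed subspaces is closed. By the componentwise formulas stated in Section~\ref{sec:prelim},
\[
\res_{W_N^N}\boldsymbol\gamma_N=(\res_{W_N}\widehat\gamma_1,\ldots,\res_{W_N}\widehat\gamma_N)^\top .
\]
Here each $\widehat\gamma_k$ is continuous on the compact set $\Omega$ by Lemma~\ref{lem:lower-block-exactB}. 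Since $\mu$ is finite, each $\widehat\gamma_k$ lies in $\mathcal H$, so these projections make sense.

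Second, apply the definition of the canonical norm on $\mathcal H^N$:
\[
\|\mathbf r_N\|_{\mathcal H^N}^2=\sum_{k=1}^N\|\res_{W_N}\widehat\gamma_k\|_{\mathcal H}^2 .
\]

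Third, use the identity $\dist_{\mathcal H}(g,V)=\|\res_V g\|_{\mathcal H}$ recorded in the preliminaries. This is the projection theorem: for a closed subspace $V$, the infimum $\inf_{v\in V}\|g-v\|$ is attained at $v=\Pi_V g$, because $g-\Pi_Vg\perp V$ and Pythagoras gives $\|g-v\|^2=\|g-\Pi_Vg\|^2+\|\Pi_Vg-v\|^2$. Squaring and summing over $k$ yields \eqref{eq:best-approx}.

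There is no real obstacle here. The only point needing care is confirming that the componentwise description of $\res_{W_N^N}$ is legitimate. This holds because the orthogonal complement of $W_N^N$ in $\mathcal H^N$ is $(W_N^\perp)^N$ under the canonical inner product, so the componentwise decomposition $\widehat\gamma_k=\Pi_{W_N}\widehat\gamma_k+\res_{W_N}\widehat\gamma_k$ gives the orthogonal decomposition in $\mathcal H^N$. By uniqueness of orthogonal decompositions, it therefore coincides with $\Pi_{W_N^N}\boldsymbol\gamma_N+\res_{W_N^N}\boldsymbol\gamma_N$.
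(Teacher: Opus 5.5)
Your proposal is correct and follows the same route as the paper: the paper's proof notes that each component of $\mathbf r_N=\res_{W_N^N}\boldsymbol\gamma_N$ is $\res_{W_N}\widehat\gamma_k$, whose norm is $\dist_{\mathcal H}(\widehat\gamma_k,W_N)$, and then sums the squares. Your extra justifications make explicit what the paper takes from its preliminaries: that $W_N^N$ is closed, that $\widehat\gamma_k\in\mathcal H$, and that $(W_N^N)^\perp=(W_N^\perp)^N$, which justifies the componentwise residual.
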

\begin{proof}
By \eqref{eq:rCCK}, each residual component has norm
$\dist_{\mathcal H}(\widehat\gamma_k,W_N)$. Summing their squared norms
proves \eqref{eq:best-approx}.
\end{proof}
When $M=\infty$, consistency of each fixed mode means
\begin{equation}
\dist_{\mathcal H}(\widehat\gamma_k,W_N)\to0
\quad(N\to\infty),\qquad k\in\N\ \text{fixed}.
\label{eq:pointwise-distance}
\end{equation}
Since $W_N$ are nested, this is equivalent to
$\widehat\gamma_k\in\overline{\bigcup_{N\ge1}W_N}$ for every fixed $k$,
where the overline denotes closure in the $\mathcal H$ norm.
Appendix~\ref{app:growing-residual} gives an additional summability condition for $\|\mathbf r_N\|_{\mathcal H^N}\to0$.
\begin{proposition}
\label{cor:uniform-closure}
Fix $N$ with $1\le N\le M$. For $g\in C(\Omega,\R)$, define
\begin{equation}
\widetilde\Pi_{W_N}g
\coloneqq
\left(
\int_\Omega g Z_N^\top\,d\mu
\right)R_N^{-1}Z_N.
\label{eq:Pi-tilde}
\end{equation}
Then $\widetilde\Pi_{W_N}:C(\Omega,\R)\to C(\Omega,\R)$ is a bounded linear projection with range $W_N$. Moreover, for every
$g\in C(\Omega,\R)$,
\begin{equation}
\widetilde\Pi_{W_N}g
=
\Pi_{W_N}g
\quad\text{in }\mathcal H.
\label{eq:Pi-coincides}
\end{equation}
Define
\begin{equation}
\Lambda_N
\coloneqq
\|\widetilde\Pi_{W_N}\|_{C(\Omega)\to C(\Omega)}.
\label{eq:LambdaN}
\end{equation}
Then
\begin{equation}
\rho_N^\star
\le
(1+\Lambda_N)
\left(
\sum_{k=1}^{N}
\inf_{w\in W_N}
\|\widehat\gamma_k-w\|_{L^\infty(\Omega)}^2
\right)^{1/2}.
\label{eq:uniform-closure-bound}
\end{equation}
\end{proposition}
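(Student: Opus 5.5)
The proof has three parts: well-definedness and the projection property of $\widetilde\Pi_{W_N}$, its agreement with $\Pi_{W_N}$ in $\mathcal H$, and a Lebesgue-type estimate applied componentwise.

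\emph{Step 1: $\widetilde\Pi_{W_N}$ is a bounded projection onto $W_N$.} For $g\in C(\Omega,\R)$, the row vector $c(g)\coloneqq\int_\Omega gZ_N^\top\,d\mu$ is finite, since $g$ and $\mathbf Z_N$ are continuous on the compact set $\Omega$ and $\mu$ is finite. Then $\widetilde\Pi_{W_N}g=c(g)R_N^{-1}\mathbf Z_N$ is a finite linear combination of the continuous components of $\mathbf Z_N$. Hence it lies in $C(\Omega,\R)$ and in $W_N$, and it depends linearly on $g$.

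Boundedness follows from the estimate
\[
|c(g)_j|\le\mu(\Omega)\,\|g\|_{L^\infty(\Omega)}\max_\Omega|(\mathbf Z_N)_j|,
\]
so $\|\widetilde\Pi_{W_N}g\|_{L^\infty}\le C\|g\|_{L^\infty}$ with $C$ depending only on $R_N^{-1}$, $\mu(\Omega)$ and $\max_\Omega\|\mathbf Z_N\|_2$.

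For the projection property, take $w=a^\top\mathbf Z_N\in W_N$. Then $c(w)=a^\top R_N$, so $\widetilde\Pi_{W_N}w=a^\top R_NR_N^{-1}\mathbf Z_N=w$. Thus $\widetilde\Pi_{W_N}$ is idempotent with range exactly $W_N$.

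\emph{Step 2: agreement with $\Pi_{W_N}$.} Identity \eqref{eq:Pi-coincides} is Lemma~\ref{thm:general-proj} applied with $\mathbf F=\mathbf Z_N$ (a basis of $W_N$) and $\mathbf G=g$ in $\mathcal H^1$. By the standing convention on continuous representatives, the identity also holds pointwise on $\Omega$. In particular, for each $k$,
\[
(\mathbf r_N)_k=\res_{W_N}\widehat\gamma_k=\widehat\gamma_k-\widetilde\Pi_{W_N}\widehat\gamma_k
\quad\text{pointwise on }\Omega,
\]
using that $\widehat\gamma_k$ is continuous by Lemma~\ref{lem:lower-block-exactB}.

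\emph{Step 3: Lebesgue-type estimate.} Fix $k$ and any $w\in W_N$. Since $\widetilde\Pi_{W_N}w=w$,
\[
\widehat\gamma_k-\widetilde\Pi_{W_N}\widehat\gamma_k=(I-\widetilde\Pi_{W_N})(\widehat\gamma_k-w).
\]
It follows that $\|(\mathbf r_N)_k\|_{L^\infty}\le(1+\Lambda_N)\|\widehat\gamma_k-w\|_{L^\infty}$. Taking the infimum over $w$ gives
\[
\|(\mathbf r_N)_k\|_{L^\infty}\le(1+\Lambda_N)\,e_k,
\qquad
e_k\coloneqq\inf_{w\in W_N}\|\widehat\gamma_k-w\|_{L^\infty(\Omega)}.
\]
Finally, for each $\vartheta\in\Omega$,
\[
\|\mathbf r_N(\vartheta)\|_2^2=\sum_{k=1}^N|(\mathbf r_N)_k(\vartheta)|^2\le\sum_{k=1}^N\|(\mathbf r_N)_k\|_{L^\infty}^2\le(1+\Lambda_N)^2\sum_{k=1}^Ne_k^2 .
\]
Taking the maximum over $\vartheta$ yields \eqref{eq:uniform-closure-bound}.

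The only delicate point is Step 2: passing from equality in $L^2(\mu)$ to equality everywhere on $\Omega$. This needs the full-support convention together with continuity of both sides. Both hold here, since $\widehat\gamma_k$ is continuous by Lemma~\ref{lem:lower-block-exactB} and $\widetilde\Pi_{W_N}\widehat\gamma_k$ is continuous by Step 1.
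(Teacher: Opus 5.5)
Your proof is correct and follows the paper's argument essentially step for step. You establish boundedness from the Gram/cross-Gram representation, identify $\widetilde\Pi_{W_N}$ with $\Pi_{W_N}$ through Lemma~\ref{thm:general-proj} and the full-support convention, and then apply a componentwise Lebesgue-type estimate summed over $k$. There are two minor differences. You derive $\|(I-\widetilde\Pi_{W_N})\widehat\gamma_k\|_{L^\infty(\Omega)}\le(1+\Lambda_N)\inf_{w\in W_N}\|\widehat\gamma_k-w\|_{L^\infty(\Omega)}$ directly from $\widetilde\Pi_{W_N}w=w$, where the paper cites the standard projection estimate. You also verify idempotency pointwise through $\int_\Omega (a^\top \mathbf Z_N)\mathbf Z_N^\top\,d\mu=a^\top R_N$, where the paper appeals to the standing convention.
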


The proof is given in Appendix~\ref{app:uniform-closure-proof}.

Uniform decay follows if the right-hand side of
\eqref{eq:uniform-closure-bound} tends to zero. The next result gives a structured ellipsoidal enclosure of the finite-order residual.
\begin{proposition}
\label{prop:shaped-residual}
Fix $1\le N\le M$ and the residual $\mathbf r_N$ in
\eqref{eq:rCCK}. Let $F_N$ be a real matrix with $N$ columns such that $F_N\mathbf r_N=0
\quad\text{on }\Omega$. Let $q\coloneqq\dim\ker F_N>0$, and let
$U_N\in\R^{N\times q}$ have orthonormal columns spanning $\ker F_N$. Suppose $S_N=S_N^\top\succ0$ in $\R^{q\times q}$ satisfies
\[
\bigl(U_N^\top\mathbf r_N(\vartheta)\bigr)^\top
S_N^{-1}
\bigl(U_N^\top\mathbf r_N(\vartheta)\bigr)
\le1,
\qquad
\vartheta\in\Omega.
\]
Then, with $D_N\coloneqq U_NS_N^{1/2},
\
\eta_N(\vartheta)
\coloneqq
S_N^{-1/2}U_N^\top\mathbf r_N(\vartheta)$, one has
$\eta_N\in C(\Omega,\R^q)$ and
\[
\mathbf r_N(\vartheta)
=
D_N\eta_N(\vartheta),
\qquad
\|\eta_N(\vartheta)\|_2\le1,
\qquad
\vartheta\in\Omega.
\]
\end{proposition}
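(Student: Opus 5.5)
The plan is to reduce everything to one orthogonal decomposition of $\R^N$ and then verify the three claims in turn.

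\emph{Decomposition.} Since $U_N\in\R^{N\times q}$ has orthonormal columns spanning $\ker F_N$, the matrix $U_NU_N^\top$ is the orthogonal projector onto $\ker F_N$. Its complementary projector $I_N-U_NU_N^\top$ projects onto $(\ker F_N)^\perp=\operatorname{range}F_N^\top$.

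\emph{Step 1: the residual lies in $\ker F_N$.} The hypothesis $F_N\mathbf r_N=0$ on $\Omega$ says that $\mathbf r_N(\vartheta)\in\ker F_N$ for every $\vartheta\in\Omega$. Here I use the standing convention: $\mathbf r_N$ is continuous, so if the hypothesis is read in $\mathcal H$, full support of $\mu$ upgrades it to a pointwise identity. Applying the projector gives
\[
\mathbf r_N(\vartheta)=U_NU_N^\top\mathbf r_N(\vartheta),\qquad \vartheta\in\Omega .
\]

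\emph{Step 2: the factorization.} Because $S_N\succ0$, it has a unique symmetric positive definite square root $S_N^{1/2}$ with inverse $S_N^{-1/2}$. Then
\[
D_N\eta_N(\vartheta)=U_NS_N^{1/2}S_N^{-1/2}U_N^\top\mathbf r_N(\vartheta)=U_NU_N^\top\mathbf r_N(\vartheta)=\mathbf r_N(\vartheta)
\]
by Step 1.

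\emph{Step 3: continuity.} The map $\eta_N$ is a constant matrix applied to $\mathbf r_N\in C(\Omega,\R^N)$, so $\eta_N\in C(\Omega,\R^q)$.

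\emph{Step 4: the unit bound.} Using the symmetry of $S_N^{-1/2}$,
\[
\|\eta_N(\vartheta)\|_2^2=\bigl(U_N^\top\mathbf r_N(\vartheta)\bigr)^\top S_N^{-1/2}S_N^{-1/2}\bigl(U_N^\top\mathbf r_N(\vartheta)\bigr)=\bigl(U_N^\top\mathbf r_N(\vartheta)\bigr)^\top S_N^{-1}\bigl(U_N^\top\mathbf r_N(\vartheta)\bigr)\le1 .
\]
The last inequality is exactly the assumed ellipsoidal bound.

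\emph{Main obstacle.} There is no real obstacle; the argument is routine. The only point needing care is Step 1. One must use that the kernel constraint holds pointwise, which relies on the continuous-representative convention, and that $U_NU_N^\top$ acts as the identity on $\ker F_N$, which relies on orthonormality of the columns of $U_N$.
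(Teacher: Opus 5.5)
Your proposal is correct and follows the paper's proof step for step. Both arguments use $\mathbf r_N(\vartheta)\in\ker F_N$ to get $U_NU_N^\top\mathbf r_N=\mathbf r_N$, which gives the factorization, and both obtain the unit bound and continuity directly from the assumed quadratic inequality and the continuity of $\mathbf r_N$. The only difference is that you write out the identity $\|\eta_N\|_2^2=(U_N^\top\mathbf r_N)^\top S_N^{-1}(U_N^\top\mathbf r_N)$, which the paper leaves implicit.
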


\begin{proof}
Since $F_N\mathbf r_N=0$ on $\Omega$,
$\mathbf r_N(\vartheta)\in\ker F_N$ for every
$\vartheta\in\Omega$. Hence $U_NU_N^\top\mathbf r_N(\vartheta)
=
\mathbf r_N(\vartheta),
\
\vartheta\in\Omega$.

Therefore, $D_N\eta_N(\vartheta)
=
U_NU_N^\top\mathbf r_N(\vartheta)
=
\mathbf r_N(\vartheta)$.
The assumed quadratic bound gives
$\|\eta_N(\vartheta)\|_2\le1$ on $\Omega$.
Continuity of $\eta_N$ follows from continuity of $\mathbf r_N$.
\end{proof}
If instead $\ker F_N=\{0\}$, then $\mathbf r_N=0$ on $\Omega$.
Exact linear constraints arise whenever $a\in\R^N$ satisfies
$a^\top\boldsymbol\gamma_N\in W_N$, since componentwise projection and the standing convention give
\[
a^\top\mathbf r_N
=
\res_{W_N}\bigl(a^\top\boldsymbol\gamma_N\bigr)
=
0
\quad\text{on }\Omega.
\]
Any finite collection of such row vectors $a^\top$ may be stacked to form $F_N$.
The choices $F_N=0$ and $U_N=I_N$ give a full-dimensional
ellipsoidal enclosure. For any $\bar\rho_N>0$ satisfying
$\rho_N^\star\le\bar\rho_N$, taking
$S_N=\bar\rho_N^2I_N$ recovers the scalar enclosure.

\section{Robust \texorpdfstring{$\mathcal{H}_\infty$}{H-infinity}
output-feedback synthesis and original-state bounds}
\label{sec:robust-hinf-uub}
\label{sec:control}
\subsection{Generalized plant and controller synthesis}
Fix $N$ with $1\le N\le M$ and set
$n_N\coloneqq\ell+N$. Using the exact lifted realization of
Theorem~\ref{thm:main-CCK}, we formulate a MIMO
$\mathcal{H}_\infty$ output-feedback problem with control input $u_a$ and
tracking signal $r-y$. Original-state estimates are subsequently
obtained from the exact readout $H_{\xi,N}$ whenever the corresponding
state-coordinate retention condition holds.

\begin{assumption}
\label{ass:feedback-setting}
\begin{enumerate}[label=\textnormal{(\roman*)}]
\item
$\Omega\subset P\times X$ has nonempty interior, and $f$ and $h$ are
locally Lipschitz on their respective open domains.

\item
The reference $r$ and actuator-realization defect $\delta_p$ are
essentially bounded measurable signals on $[0,\infty)$, with finite
bounds $\bar r,\delta\ge0$ satisfying
\[
\|r(t)\|_2\le\bar r,\\
\|\delta_p(t)\|_2\le\delta
\quad\text{a.e.}
\]
\end{enumerate}
\end{assumption}
To match the standard generalized-plant partition and separate the
two uncertainty blocks in \eqref{eq:Delta}, define
\begin{equation}
\begin{aligned}
E_{p,N}
\coloneqq
\begin{bmatrix}
I_\ell\\
0_{N\times\ell}
\end{bmatrix},
E_{c,N}
\coloneqq
\begin{bmatrix}
0_{\ell\times N}\\
I_N
\end{bmatrix}.
\label{eq:channel-matrices-s6}
\end{aligned}
\end{equation}
Then, along every admissible trajectory,
\[
\Delta_N
=
E_{p,N}\delta_p
+
E_{c,N}\bigl(\mathbf r_N+\widehat J_N\delta_p\bigr).
\]

For the defect bound $\delta$ in Assumption~\ref{ass:feedback-setting},
write $\sigma_{c,N}^{\star}:=\sigma_{c,N}^{\star}(\delta)$ as defined in
\eqref{eq:sigma-c-s6}.

Throughout this section, a trajectory is understood to be locally
absolutely continuous and to satisfy the governing differential
equations almost everywhere.

Consider a trajectory of
\eqref{eq:actuator} and \eqref{eq:plant-augmented}
on its maximal interval of existence $[0,T_{\max,N})$, with augmented state
$\xi(t)=\col\bigl(p(t),x(t)\bigr)$.
If $\xi(0)\in\operatorname{int}\Omega$, define the first exit time
\begin{equation}
\begin{aligned}
\tau_{\Omega,N}
\coloneqq
\inf
\left\{
t\in[0,T_{\max,N}):
\xi(t)\notin\operatorname{int}\Omega
\right\},
\label{eq:first-exit-s6}
\end{aligned}
\end{equation}
with $\inf\varnothing\coloneqq T_{\max,N}$.
\begin{proposition}
\label{prop:outer-realization}
Under Assumption~\ref{ass:feedback-setting}, let
$(p,x,u_a)$ be a trajectory of
\eqref{eq:actuator} and \eqref{eq:plant-augmented}, with
$\tau_{\Omega,N}$ defined by \eqref{eq:first-exit-s6}. Then, on
$[0,\tau_{\Omega,N})$, the lifted trajectory
\[
\zeta_N(t)
\coloneqq
\mathbf Z_N\bigl(p(t),x(t)\bigr)
\]
satisfies the LTI dynamics
\begin{align}
\dot\zeta_N
&=
A_N\zeta_N+B_Nu_a
+\delta E_{p,N}w_{p,N}
+\sigma_{c,N}^{\star}E_{c,N}w_{c,N}
\quad\text{a.e.},
\notag\\
y
&=
C_{y,N}\zeta_N+\nu_N^{\star}w_{v,N},
\label{eq:outer-realization}
\end{align}
for measurable channels $w_{p,N}$, $w_{c,N}$, and $w_{v,N}$,
representing the normalized actuator defect, lower-block uncertainty,
and output-readout residual, respectively, and taking values in
$\R^\ell$, $\R^N$, and $\R^{n_y}$,
\begin{equation}
\begin{aligned}
\|w_{p,N}(t)\|_2\le1,\
\|w_{c,N}(t)\|_2\le1,\
\|w_{v,N}(t)\|_2\le1
\label{eq:normalized-bounds-s6}
\end{aligned}
\end{equation}
a.e. on $[0,\tau_{\Omega,N})$.
\end{proposition}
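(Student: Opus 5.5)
The plan is to invoke Theorem~\ref{thm:main-CCK}(ii) on $[0,\tau_{\Omega,N})$ and then normalize each uncertainty term by its uniform bound from Definition~\ref{cor:uniform-uncertainty} and Corollary~\ref{cor:cck-readouts}(i). First, on $[0,\tau_{\Omega,N})$ the state satisfies $\xi(t)\in\operatorname{int}\Omega\subset\Omega\subset P\times X$. Theorem~\ref{thm:main-CCK} is stated for $C^1$ trajectories, whereas here trajectories are only locally absolutely continuous and $\delta_p,u_a$ are merely measurable. So I will first redo its proof in the a.e.\ sense. The extensions of $\widehat\phi_k$ are $C^1$ on the open set $\mathcal O\supset\Omega$, so $t\mapsto\widehat\phi_k(p(t),x(t))$ is locally absolutely continuous, and the chain rule holds a.e. Together with Lemma~\ref{lem:lower-block-exactB} and the pointwise identity \eqref{eq:Upsilon-split}, this gives \eqref{eq:main-CCK-ODE}--\eqref{eq:Delta} a.e. This is the only genuinely technical step. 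The argument is a composition of a $C^1$ function with an AC curve, evaluated at points where the curve is differentiable.

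Next I split $\Delta_N=E_{p,N}\delta_p+E_{c,N}(\mathbf r_N+\widehat J_N\delta_p)$ and define the channels. Set $w_{p,N}\coloneqq\delta_p/\delta$ if $\delta>0$ and $w_{p,N}\coloneqq0$ if $\delta=0$; in the latter case $\delta_p=0$ a.e. Set $w_{c,N}(t)\coloneqq(\mathbf r_N+\widehat J_N\delta_p)(\xi(t))/\sigma_{c,N}^\star$ when $\sigma_{c,N}^\star>0$, and $w_{c,N}\coloneqq0$ otherwise. The bound $\|w_{c,N}\|_2\le1$ holds a.e.\ because $\xi(t)\in\Omega$ and $\|\delta_p(t)\|_2\le\delta$ a.e., so the definition of the maximum \eqref{eq:sigma-c-s6} applies. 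When $\sigma_{c,N}^\star=0$, the term itself vanishes a.e. For the output, apply \eqref{eq:cck-output-readout-residual} pointwise at $\xi(t)$ to get $y=h(x)=C_{y,N}\zeta_N+v_{y,N}(\xi)$. Then set $w_{v,N}\coloneqq v_{y,N}(\xi)/\nu_N^\star$, or $0$ if $\nu_N^\star=0$, and the bound follows from \eqref{eq:nu-star}.

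Finally, I check measurability. Each channel is a continuous function ($\mathbf r_N$, $\widehat J_N$, $v_{y,N}$ are continuous on $\Omega$) composed with the continuous $\xi(\cdot)$, plus a product with the measurable $\delta_p$, so each is measurable. Substituting the channels into the lifted ODE yields \eqref{eq:outer-realization} a.e.\ on $[0,\tau_{\Omega,N})$, with the bounds \eqref{eq:normalized-bounds-s6}. The degenerate zero-bound cases are handled by the convention above, so they need no separate argument.
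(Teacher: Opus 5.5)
Your proposal is correct and takes the same route as the paper. The paper also re-derives the lifted equation a.e. by applying the chain rule to the $C^1$ extensions on $\mathcal O$ along the locally absolutely continuous trajectory (it works on arbitrary compact subintervals $J\subset[0,\tau_{\Omega,N})$), and then normalizes the channels by $\delta$, $\sigma_{c,N}^\star$ and $\nu_N^\star$, setting zero-scale channels to zero, exactly as you do.
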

\begin{proof}
Fix a compact interval
$J\subset[0,\tau_{\Omega,N})$. By the definition of the first exit
time,
$(p(t),x(t))\in\Omega
\text{for every }t\in J$.
The components of $\mathbf Z_N$ admit $C^1$ extensions to the open
neighborhood $\mathcal O$ of $\Omega$. Hence, along the locally
absolutely continuous trajectory,
\[
\begin{aligned}
\frac{\mathrm d}{\mathrm dt}
\mathbf Z_N(p(t),x(t))
=&
D_p\mathbf Z_N(p(t),x(t))\dot p(t)\\
& +D_x\mathbf Z_N(p(t),x(t))\dot x(t)
\end{aligned}
\]
for almost every $t\in J$.

Using the dynamics
\eqref{eq:actuator}--\eqref{eq:plant-augmented}, the field identity
\eqref{eq:Upsilon-split}, and
Lemma~\ref{lem:lower-block-exactB} gives
\begin{equation}
\begin{aligned}
\dot\zeta_N
={}&
A_N\zeta_N+B_Nu_a
+E_{p,N}\delta_p\\
&+
E_{c,N}
\bigl(
\mathbf r_N(p,x)
+\widehat J_N(p,x)\delta_p
\bigr)
\end{aligned}
\label{eq:unscaled-ae-s6}
\end{equation}
a.e. on $J$. For each positive channel scale, define
\[
\begin{aligned}
w_{p,N}(t)&=\delta^{-1}\delta_p,
\quad
w_{v,N}(t)=(\nu_N^\star)^{-1}v_{y,N}(p,x),\\
w_{c,N}(t)&=(\sigma_{c,N}^\star)^{-1}
\bigl[\mathbf r_N(p,x)+\widehat J_N(p,x)\delta_p\bigr].
\end{aligned}
\]
These signals are measurable. The $\delta$ in
Assumption~\ref{ass:feedback-setting}.(ii), with
\eqref{eq:sigma-c-s6} and \eqref{eq:nu-star}, yields
\eqref{eq:normalized-bounds-s6}.

Substitution into \eqref{eq:unscaled-ae-s6} gives the state equation in
\eqref{eq:outer-realization}. Moreover,
\eqref{eq:cck-output-readout-residual} evaluated along the trajectory
gives
\[
y(t)
=
C_{y,N}\zeta_N(t)
+
v_{y,N}(p(t),x(t)),
\]
which yields the output equation in \eqref{eq:outer-realization}. Since $J$ was arbitrary, the
state identity holds almost everywhere on
$[0,\tau_{\Omega,N})$, and the output identity holds throughout that
interval.
\end{proof}

Zero-scale channels are set to zero and omitted. The output-residual channel vanishes when every component of $h$
belongs to $W_N$. Otherwise, it records the normalized output-projection error
along the trajectory. The actual channels are coupled through the
trajectory and actuator defect. Treating them as independent unit-ball
inputs can make the resulting bounds conservative.

For $\delta=0$, Proposition~\ref{prop:shaped-residual}
allows $\sigma_{c,N}^{\star}E_{c,N}$ in
\eqref{eq:outer-realization} to be replaced by $E_{c,N}D_N$,
with $w_{c,N}(t)=\eta_N(\xi(t))\in\R^q$ and
$\|w_{c,N}(t)\|_2\le1$ before $\tau_{\Omega,N}$.
For $\delta>0$, the shaped enclosure and its exact constraints
must hold for $\mathbf r_N(\vartheta)+\widehat J_N(\vartheta)d$
on $\Omega\times\{d:\|d\|_2\le\delta\}$.

The exact outer model is augmented with fixed finite-dimensional
stable proper MIMO weighting filters $W_P$, $W_U$, and $W_T$, acting
on the tracking error, control input, and physical output,
respectively. The overall mixed-sensitivity structure may be selected
following standard MIMO robust-control practice
\cite{skogestad2005multivariable,kurkccu2022existence}.

Choose $\sigma_{r,N}>0$ and set
$w_{r,N}\coloneqq\sigma_{r,N}^{-1}r$.
The exogenous input is $w_N=\col(w_{r,N},w_{p,N},w_{c,N},w_{v,N})$
and the measured tracking signal is $y_{K,N}=e_N=r-y$.
Let
\begin{equation}
\begin{aligned}
z_N
\coloneqq
\col\bigl(W_Pe_N,\;W_Uu_a,\;W_Ty\bigr)
\label{eq:physical-performance-s6}
\end{aligned}
\end{equation}
denote the stacked outputs of the weighting filters.

Let $G_N$ be a finite-dimensional state-space realization of the
interconnection of \eqref{eq:outer-realization} with these filters:
\begin{equation}
\begin{bmatrix}z_N\\y_{K,N}\end{bmatrix}
=
\underbrace{\left[
\begin{array}{c|cc}
A_{G,N}&B_{1,N}&B_{2,N}\\ \hline
C_{1,N}&D_{11,N}&D_{12,N}\\
C_{2,N}&D_{21,N}&0
\end{array}
\right]}_{=:G_N}
\begin{bmatrix}w_N\\u_a\end{bmatrix}.
\label{eq:generalized-plant-s6}
\end{equation}
The generalized-plant state is $x_{G,N}=\col(\zeta_N,x_W)$,
where $x_W$ collects the weighting-filter states. In the above channel
order, the feedthrough blocks are
\begin{equation}
\begin{aligned}
D_{12,N}
=
\col\bigl(0,W_U(\infty),0\bigr),
\\
D_{21,N}
=
\begin{bmatrix}
\sigma_{r,N}I_{n_y}&0&0&-\nu_N^\star I_{n_y}
\end{bmatrix}.
\label{eq:generalized-feedthrough-s6}
\end{aligned}
\end{equation}
The remaining realization matrices are obtained by standard state
augmentation.
For any finite-dimensional proper controller $K_N$, the lower
interconnection with $G_N$ is well posed because $D_{22,N}=0$.
Its zero-initial-state map from $w_N$ to $z_N$ is
$\mathcal F_\ell(G_N,K_N)$.
Let $x_{K,N}$ be the controller state and set
$\chi_N=\col(\zeta_N,x_W,x_{K,N})$.

Whenever \eqref{eq:state-coordinate-retention} holds, write
\[
C_{\xi,N}:=\begin{bmatrix}H_{\xi,N}&0&0\end{bmatrix},
\qquad \xi=C_{\xi,N}\chi_N.
\]
for the physical-state readout in the above canonical state ordering.

Let $n_{\chi,N}$ denote the dimension of $\chi_N$. Define $C_{e,N}
\coloneqq
\begin{bmatrix}
C_{2,N}&0
\end{bmatrix},
D_{e,N}
\coloneqq
D_{21,N}$,
where the zero block corresponds to the controller state. The
closed-loop realization is
\begin{subequations}
\label{eq:closed-loop-realization-s6}
\begin{align}
\dot\chi_N
&=
A_{\mathrm{cl},N}\chi_N+B_{\mathrm{cl},N}w_N,
\label{eq:closed-state-s6}\\
z_N
&=
C_{\mathrm{cl},N}\chi_N+D_{\mathrm{cl},N}w_N,
\label{eq:closed-output-s6}\\
e_N
&=
C_{e,N}\chi_N+D_{e,N}w_N.
\label{eq:closed-error-s6}
\end{align}
\end{subequations}
where $A_{\mathrm{cl},N},\ B_{\mathrm{cl},N},\ C_{\mathrm{cl},N},$ and $D_{\mathrm{cl},N}$ are obtained by setting $D_{22,N}=0$ in 
\cite[Sec.~9.3, p.~180]{ZhouDoyle1998}.

By Proposition~\ref{prop:outer-realization} and the state augmentation
used to construct $G_N$, every nonlinear trajectory covered by that
proposition, together with the corresponding weighting-filter and
controller states, satisfies
\eqref{eq:closed-loop-realization-s6} almost everywhere before its
first exit from $\Omega$.

\begin{assumption}
\label{ass:hinf-feasibility}

For the selected order $N$, $(A_{G,N},B_{2,N})$ is stabilizable and
$(A_{G,N},C_{2,N})$ is detectable. The standard regularity conditions~\cite{DGKF1989,Safonov1989LoopShifting} for continuous-time $\mathcal H_\infty$ synthesis hold at each tested performance level.

\end{assumption}
Let $\mathcal{RH}_\infty$ denote the set of real-rational proper
transfer matrices whose poles lie in the open left half-plane. For
$T\in\mathcal{RH}_\infty$, define
\[
\|T\|_\infty
\coloneqq
\sup_{\omega\in\R}
\bar\sigma\bigl(T(\mathrm j\omega)\bigr),
\]
where $\bar\sigma$ denotes the largest singular value. Moreover, let $\mathcal K_{\mathrm{stab},N}$ denote the finite-dimensional proper
controllers that internally stabilize the lower interconnection with
$G_N$, and define
\begin{equation}
\begin{aligned}
\gamma_{N,\mathrm{syn}}^\star
\coloneqq
\inf_{K_N\in\mathcal K_{\mathrm{stab},N}}
\left\|
\mathcal F_\ell(G_N,K_N)
\right\|_\infty.
\label{eq:synthesis-problem-s6}
\end{aligned}
\end{equation}

For fixed $G_N$, stabilizability and detectability ensure that
$\mathcal K_{\mathrm{stab},N}$ is nonempty and
$\gamma_{N,\mathrm{syn}}^\star$ is finite.
With controller order and structure unrestricted, this problem can also be written as an equivalent convex model-matching problem using the Youla parameterization~\cite{anderson2019systemlevel,kurkcutac26}.

Under Assumption~\ref{ass:hinf-feasibility}, apply standard
normalization and, where needed, loop shifting at a tested level
$\gamma_{N,\mathrm{syn}}>0$~\cite{Safonov1989LoopShifting}.
If the Riccati feasibility conditions hold at this level, the
Doyle--Glover--Khargonekar--Francis construction~\cite{DGKF1989},
transformed back to the original generalized plant, yields a
finite-dimensional proper internally stabilizing controller $K_N$
satisfying
\begin{equation}
\begin{aligned}
\left\|
\mathcal F_\ell(G_N,K_N)
\right\|_\infty
<
\gamma_{N,\mathrm{syn}}.
\label{eq:synthesis-level-s6}
\end{aligned}
\end{equation}

The closure-channel matrix $E_{c,N}D_N$ enters $G_N$ through its
external-input columns, leaving the nominal feedback interconnection
and its closed-loop state matrix unchanged for a fixed controller.

We compare the scalar closure channel in~\eqref{eq:outer-realization} with its shaped replacement $E_{c,N}D_N$. Suppose
$D_N=\sigma_{c,N}^\star M_{c,N}$ with
$\|M_{c,N}\|_{2\to2}\le1$.
Fix all other channels and weights.
Let $T_{zw,N}^{\mathrm{scalar}}$ and $T_{zw,N}^{D}$ denote the
closed-loop maps for the scalar and shaped closure channels, respectively.
Their internally stabilizing controllers form the same class, and every
such $K_N$ satisfies
\[
\begin{aligned}
T_{zw,N}^{D}(s;K_N)
&=T_{zw,N}^{\mathrm{scalar}}(s;K_N)\mathsf M_N,\\
\mathsf M_N&:=\operatorname{diag}(I_{n_y+\ell},M_{c,N},I_{n_y}).
\end{aligned}
\]
in the ordering $(w_{r,N},w_{p,N},w_{c,N},w_{v,N})$,
with zero-scale blocks omitted.
Thus $\|T_{zw,N}^{D}\|_\infty\le
\|T_{zw,N}^{\mathrm{scalar}}\|_\infty$, also after taking the infimum
over these controllers. The same contraction preserves any
quadratic bounded-real dissipation inequality of the form
\eqref{eq:dissipation-s6}: if $P_N$, $\varepsilon_N$, and
$\gamma_{N,\mathrm a}$ certify the scalar-radius realization, they also
certify the shaped realization, since
$w_N^{\mathrm{scalar}}=\mathsf M_Nw_N^D$ and
$\|\mathsf M_Nw_N^D\|_2\le\|w_N^D\|_2$.

Since $\mathcal F_\ell(G_N,K_N)$ is the zero-initial-state map from
$w_N$ to $z_N$, \eqref{eq:synthesis-level-s6} implies
\begin{equation}
\begin{aligned}
\|z_N\|_{L^2}
<
\gamma_{N,\mathrm{syn}}
\|w_N\|_{L^2}
\label{eq:induced-L2-s6}
\end{aligned}
\end{equation}
for every nonzero $w_N\in L^2$ with $\chi_N(0)=0$.

The induced-$L^2$ synthesis guarantee is complemented below by a
quadratic bounded-input estimate for the fixed closed loop.

\subsection{Closed-loop analysis}

Fix an internally stabilizing controller $K_N$ satisfying
\eqref{eq:synthesis-level-s6}. We derive a quadratic estimate for the
closed-loop realization \eqref{eq:closed-loop-realization-s6}.

Define the closed-loop transfer matrix
\begin{equation}
\begin{aligned}
T_{zw,N}(s)
\coloneqq
C_{\mathrm{cl},N}
(sI-A_{\mathrm{cl},N})^{-1}
B_{\mathrm{cl},N}
+
D_{\mathrm{cl},N}.
\label{eq:Tzw-s6}
\end{aligned}
\end{equation}
Thus $T_{zw,N}=\mathcal F_\ell(G_N,K_N)$.

The analysis may use any fixed similarity transformation
$\widetilde\chi_N=T_{\chi,N}\chi_N$, with
\[
\begin{aligned}
\widetilde A_{\mathrm{cl},N}&=T_{\chi,N}A_{\mathrm{cl},N}T_{\chi,N}^{-1},\\
\widetilde B_{\mathrm{cl},N}&=T_{\chi,N}B_{\mathrm{cl},N}.
\end{aligned}
\]
Each output or physical-state readout $C$ becomes $CT_{\chi,N}^{-1}$.
Direct feedthrough is unchanged. For notational simplicity, we omit tildes below. Each certificate uses one fixed realization.

The following consequence of the classical bounded-real lemma
provides the quadratic estimate used in the original-state bounds.
\begin{theorem}
\label{thm:lyapunov-are}
For any $\gamma_{N,\mathrm a}
>
\|T_{zw,N}\|_\infty$, there exist
$P_N=P_N^\top\succ0$ and $\varepsilon_N>0$ such that
\begin{equation}
\begin{aligned}
V_{\mathrm{cl},N}(\chi_N)
\coloneqq
\chi_N^\top P_N\chi_N
\label{eq:lyapunov-function-s6}
\end{aligned}
\end{equation}
satisfies
\begin{equation}
\begin{aligned}
\dot V_{\mathrm{cl},N}
+
\|z_N\|_2^2
+
\varepsilon_N\|\chi_N\|_2^2
-
\gamma_{N,\mathrm a}^2\|w_N\|_2^2
\le0 \quad
\text{a.e.}
\label{eq:dissipation-s6}
\end{aligned}
\end{equation}
along \eqref{eq:closed-loop-realization-s6}.
\end{theorem}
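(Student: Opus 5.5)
The argument is the strict bounded-real lemma applied to the closed-loop realization \eqref{eq:closed-loop-realization-s6}, with a small perturbation to produce the margin $\varepsilon_N\|\chi_N\|_2^2$.

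\emph{Setup.} Since $K_N$ is internally stabilizing, $A_{\mathrm{cl},N}$ is Hurwitz. Fix $\gamma_{N,\mathrm a}>\|T_{zw,N}\|_\infty$. The strict (KYP) bounded-real lemma then gives $P_N=P_N^\top\succ0$ with
\[
\mathcal M(P_N)\coloneqq
\begin{bmatrix}
A_{\mathrm{cl},N}^\top P_N+P_NA_{\mathrm{cl},N}+C_{\mathrm{cl},N}^\top C_{\mathrm{cl},N} & P_NB_{\mathrm{cl},N}+C_{\mathrm{cl},N}^\top D_{\mathrm{cl},N}\\
B_{\mathrm{cl},N}^\top P_N+D_{\mathrm{cl},N}^\top C_{\mathrm{cl},N} & D_{\mathrm{cl},N}^\top D_{\mathrm{cl},N}-\gamma_{N,\mathrm a}^2 I
\end{bmatrix}\prec0 .
\]
This is the standard statement in \cite{ZhouDoyle1998}. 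It can also be obtained from the stabilizing solution of the associated Riccati equation at a level slightly below $\gamma_{N,\mathrm a}$. The strict form does not require minimality of the realization, because it needs only stability of $A_{\mathrm{cl},N}$ and the strict norm bound.

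\emph{Margin.} Because $\mathcal M(P_N)\prec0$ and the cone of negative definite matrices is open, there is $\varepsilon_N>0$ with
\[
\mathcal M(P_N)+\operatorname{diag}(\varepsilon_N I,0)\preceq0 .
\]
One can take $\varepsilon_N=\lambda_{\min}(-\mathcal M(P_N))$.

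\emph{Dissipation along trajectories.} The closed-loop state is locally absolutely continuous and $\chi_N\mapsto\chi_N^\top P_N\chi_N$ is smooth, so for almost every $t$,
\[
\dot V_{\mathrm{cl},N}=2\chi_N^\top P_N\bigl(A_{\mathrm{cl},N}\chi_N+B_{\mathrm{cl},N}w_N\bigr).
\]
Substitute $z_N=C_{\mathrm{cl},N}\chi_N+D_{\mathrm{cl},N}w_N$. Then $\dot V_{\mathrm{cl},N}+\|z_N\|_2^2+\varepsilon_N\|\chi_N\|_2^2-\gamma_{N,\mathrm a}^2\|w_N\|_2^2$ equals the quadratic form of $\mathcal M(P_N)+\operatorname{diag}(\varepsilon_N I,0)$ evaluated at $\col(\chi_N,w_N)$. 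This form is nonpositive, which gives \eqref{eq:dissipation-s6} almost everywhere.

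\emph{Main obstacle.} The key step is invoking the strict bounded-real lemma in a form with no minimality hypothesis. If needed, I would derive it directly. Note that $\gamma_{N,\mathrm a}^2I-D_{\mathrm{cl},N}^\top D_{\mathrm{cl},N}\succ0$, since $\bar\sigma(D_{\mathrm{cl},N})\le\|T_{zw,N}\|_\infty<\gamma_{N,\mathrm a}$. Next, augment the output with $\sqrt{\epsilon}\,\chi_N$; for small $\epsilon>0$ the augmented transfer matrix still has $\mathcal H_\infty$ norm below $\gamma_{N,\mathrm a}$ by continuity. Apply the nonstrict bounded-real Riccati equation to the augmented system to obtain its stabilizing solution, which is positive semidefinite. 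Then perturb it to $P_N\succ0$ using a Lyapunov solution for the Hurwitz matrix $A_{\mathrm{cl},N}$. Finally, use Schur complements to recover strict negativity of $\mathcal M(P_N)$.
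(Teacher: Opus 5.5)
Your proof is correct and uses the same core tool as the paper, the bounded-real lemma for the Hurwitz closed loop. The packaging differs.

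\textbf{Your route.} You invoke the strict KYP/LMI form as a black box, which indeed needs no minimality. You then extract $\varepsilon_N$ afterwards from the eigenvalue margin of $\mathcal M(P_N)\prec0$.

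\textbf{The paper's route.} The paper builds the margin in first. It chooses $\varepsilon_N$ a priori, from frequency-domain quantities, so that the stacked map $\col(T_{zw,N},\sqrt{\varepsilon_N}\,T_{\chi w,N})$ still has $\mathcal H_\infty$ norm below $\gamma_{N,\mathrm a}$. It then takes the stabilizing solution of a single Riccati equation containing the term $\varepsilon_N I$. The dissipation inequality follows as an exact completed square.

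\textbf{Your fallback.} Your fallback derivation is essentially the paper's proof, but two of its steps are unnecessary.
\begin{itemize}
\item The $\varepsilon_N I$ term already forces $P_N\succ0$. If $P_Nq=0$, pre- and post-multiplying the Riccati equation by $q^\top$ and $q$ leaves a sum of nonnegative terms equal to zero, including $\varepsilon_N\|q\|_2^2$. So $q=0$, and no Lyapunov perturbation is needed.
\item The augmented Riccati equation already states that $\mathcal M(P_N)+\operatorname{diag}(\varepsilon_N I,0)$ has negative definite lower-right block and zero Schur complement. Hence this matrix is negative semidefinite, which is all the dissipation inequality requires. Recovering strict negativity of $\mathcal M(P_N)$ and then re-extracting a margin from it is a detour.
\end{itemize}

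\textbf{Trade-off.} Your LMI formulation is shorter once the strict lemma is granted. The paper's route is self-contained, since the strict lemma without minimality is usually proved by exactly this augmentation-plus-Riccati argument. It also yields $P_N$ concretely from one post-synthesis Riccati solve.
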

\begin{proof}
Internal stability implies that $A_{\mathrm{cl},N}$ is Hurwitz. Define $T_{\chi w,N}(s)
\coloneqq
(sI-A_{\mathrm{cl},N})^{-1}B_{\mathrm{cl},N}$.

Then $T_{\chi w,N}\in\mathcal{RH}_\infty$. For every
$\varepsilon>0$,
\[
\left\|
\begin{bmatrix}
T_{zw,N}\\
\sqrt{\varepsilon}\,T_{\chi w,N}
\end{bmatrix}
\right\|_\infty^2
\le
\|T_{zw,N}\|_\infty^2
+
\varepsilon\|T_{\chi w,N}\|_\infty^2.
\]
If $T_{\chi w,N}\ne0$, choose
\[
0<\varepsilon_N
<
\frac{
\gamma_{N,\mathrm a}^2-\|T_{zw,N}\|_\infty^2
}{
\|T_{\chi w,N}\|_\infty^2
};
\]
if $T_{\chi w,N}=0$, choose any $\varepsilon_N>0$. Hence the transfer
from $w_N$ to $z_{\varepsilon,N}
\coloneqq
\col\bigl(z_N,\sqrt{\varepsilon_N}\chi_N\bigr)$ has $\mathcal{H}_\infty$ norm strictly below $\gamma_{N,\mathrm a}$.

Since $D_{\mathrm{cl},N}$ is the feedthrough matrix of $T_{zw,N}$,
\[
\|D_{\mathrm{cl},N}\|_{2\to2}
\le
\|T_{zw,N}\|_\infty
<
\gamma_{N,\mathrm a}.
\]
Therefore,
\[
R_{\gamma,N}
\coloneqq
\gamma_{N,\mathrm a}^2I
-
D_{\mathrm{cl},N}^\top D_{\mathrm{cl},N}
\succ0.
\]
Because $A_{\mathrm{cl},N}$ is Hurwitz,
$(A_{\mathrm{cl},N},B_{\mathrm{cl},N})$ is stabilizable; the augmented
output contains $\sqrt{\varepsilon_N}I$, so detectability is
automatic. The continuous-time strict bounded-real lemma in
Riccati form~\cite[Ch.~12]{ZhouDoyle1998} therefore gives a stabilizing solution
$P_N=P_N^\top\succeq0$ of
\begin{equation}
\begin{aligned}
0={}&
A_{\mathrm{cl},N}^\top P_N
+
P_NA_{\mathrm{cl},N}
+
C_{\mathrm{cl},N}^\top C_{\mathrm{cl},N}
+
\varepsilon_NI\\
&+
\Theta_NR_{\gamma,N}^{-1}\Theta_N^\top,
\end{aligned}
\label{eq:analysis-ARE-s6}
\end{equation}
where $\Theta_N
\coloneqq
P_NB_{\mathrm{cl},N}
+
C_{\mathrm{cl},N}^\top D_{\mathrm{cl},N}$.
Equation~\eqref{eq:analysis-ARE-s6} is a single post-synthesis Riccati
equation for the fixed closed loop.

To verify positive definiteness, let $q\in\ker P_N$. Premultiplying
and postmultiplying \eqref{eq:analysis-ARE-s6} by $q^\top$ and $q$
gives
\[
0
=
\|C_{\mathrm{cl},N}q\|_2^2
+
\varepsilon_N\|q\|_2^2
+
\left\|
R_{\gamma,N}^{-1/2}
D_{\mathrm{cl},N}^\top C_{\mathrm{cl},N}q
\right\|_2^2.
\]
Thus $q=0$, and hence $P_N\succ0$.

Finally, substituting \eqref{eq:analysis-ARE-s6} into the derivative of
\eqref{eq:lyapunov-function-s6} along
\eqref{eq:closed-state-s6} and completing the square yields
\begin{align}
&\dot V_{\mathrm{cl},N}
+
\|z_N\|_2^2
+
\varepsilon_N\|\chi_N\|_2^2
-
\gamma_{N,\mathrm a}^2\|w_N\|_2^2
\notag\\
&\quad=
-\left\|R_{\gamma,N}^{1/2}
\bigl(w_N-R_{\gamma,N}^{-1}\Theta_N^\top\chi_N\bigr)
\right\|_2^2\le0,
\label{eq:square-completion-s6}
\end{align}
which proves \eqref{eq:dissipation-s6}.
\end{proof}
\subsection{Original-state bounds}
For each nonlinear trajectory covered by
Proposition~\ref{prop:outer-realization}, the normalized exogenous
input $w_N$ is essentially bounded on $[0,\tau_{\Omega,N})$ by
Assumption~\ref{ass:feedback-setting}, the definition of $w_{r,N}$,
and \eqref{eq:normalized-bounds-s6}. Define
\begin{equation}
\begin{aligned}
m_{w,N}
\coloneqq
\operatorname*{ess\,sup}_{0\le t<\tau_{\Omega,N}}
\|w_N(t)\|_2.
\label{eq:muN-s6}
\end{aligned}
\end{equation}
Along a closed-loop trajectory, abbreviate
$V_{\mathrm{cl},N}(t)
\coloneqq V_{\mathrm{cl},N}(\chi_N(t))$.

\begin{corollary}
\label{cor:lyapunov-estimates}
Under the hypotheses of Theorem~\ref{thm:lyapunov-are}, define
\begin{equation}
\begin{aligned}
a_N
\coloneqq
\frac{\varepsilon_N}{\lambda_{\max}(P_N)}
>0.
\label{eq:aN-s6}
\end{aligned}
\end{equation}
Then
\begin{equation}
\begin{aligned}
V_{\mathrm{cl},N}(t)
\le
e^{-a_Nt}V_{\mathrm{cl},N}(0)
+
\frac{\gamma_{N,\mathrm a}^2}{a_N}
\bigl(1-e^{-a_Nt}\bigr)m_{w,N}^2
\label{eq:V-bound-s6}
\end{aligned}
\end{equation}
for every $0\le t<\tau_{\Omega,N}$.
\end{corollary}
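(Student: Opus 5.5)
The plan is to use the dissipation inequality \eqref{eq:dissipation-s6} from Theorem~\ref{thm:lyapunov-are} as a scalar differential inequality for $V_{\mathrm{cl},N}(t)$ and then apply a comparison (Gr\"onwall) argument on $[0,\tau_{\Omega,N})$. First, positivity of $a_N$ is immediate, since $\varepsilon_N>0$ and $P_N\succ0$ give $\lambda_{\max}(P_N)>0$. Next, discard the nonnegative term $\|z_N\|_2^2$ in \eqref{eq:dissipation-s6}. Then use $\chi_N^\top P_N\chi_N\le\lambda_{\max}(P_N)\|\chi_N\|_2^2$, which gives $\varepsilon_N\|\chi_N\|_2^2\ge a_NV_{\mathrm{cl},N}$. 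This yields
\[
\dot V_{\mathrm{cl},N}(t)\le -a_NV_{\mathrm{cl},N}(t)+\gamma_{N,\mathrm a}^2\|w_N(t)\|_2^2\le -a_NV_{\mathrm{cl},N}(t)+\gamma_{N,\mathrm a}^2m_{w,N}^2
\]
for almost every $t\in[0,\tau_{\Omega,N})$. The second inequality uses the definition \eqref{eq:muN-s6} of $m_{w,N}$ as an essential supremum, so it holds off a null set.

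The one point requiring care, and the step I expect to be the main obstacle, is regularity. The closed-loop state $\chi_N$ is only locally absolutely continuous: the nonlinear trajectory satisfies the ODEs a.e., and by Proposition~\ref{prop:outer-realization} and the subsequent remark it satisfies \eqref{eq:closed-loop-realization-s6} a.e. before first exit. Hence $V_{\mathrm{cl},N}(t)=\chi_N(t)^\top P_N\chi_N(t)$ is a quadratic function of an absolutely continuous function on each compact subinterval $J=[0,T]\subset[0,\tau_{\Omega,N})$, and so is itself absolutely continuous there, with derivative given a.e. by the chain rule. On such $J$, set $\Psi(t)\coloneqq e^{a_Nt}V_{\mathrm{cl},N}(t)$. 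It is absolutely continuous, and a.e.
\[
\dot\Psi(t)=e^{a_Nt}\bigl(\dot V_{\mathrm{cl},N}+a_NV_{\mathrm{cl},N}\bigr)\le e^{a_Nt}\gamma_{N,\mathrm a}^2m_{w,N}^2 .
\]
Integrating from $0$ to $t$, which is justified by absolute continuity through the fundamental theorem of calculus for Lebesgue integrals, gives $\Psi(t)-\Psi(0)\le \gamma_{N,\mathrm a}^2m_{w,N}^2(e^{a_Nt}-1)/a_N$. Multiplying by $e^{-a_Nt}$ yields \eqref{eq:V-bound-s6} on $J$.

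Since every $t<\tau_{\Omega,N}$ lies in some compact subinterval $[0,t]$, the bound holds for all $0\le t<\tau_{\Omega,N}$. If $m_{w,N}$ were infinite the statement would be vacuous, but it is finite by the essential boundedness noted before \eqref{eq:muN-s6}. The case $\tau_{\Omega,N}=0$ is trivial, since $\xi(0)\in\operatorname{int}\Omega$ forces $\tau_{\Omega,N}>0$ by continuity anyway.
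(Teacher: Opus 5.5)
Your proposal is correct and matches the paper's proof: drop $\|z_N\|_2^2$ from \eqref{eq:dissipation-s6}, use $\varepsilon_N\|\chi_N\|_2^2\ge a_NV_{\mathrm{cl},N}$ and the essential supremum $m_{w,N}$ to get $\dot V_{\mathrm{cl},N}\le -a_NV_{\mathrm{cl},N}+\gamma_{N,\mathrm a}^2m_{w,N}^2$ a.e., then multiply by $e^{a_Nt}$ and integrate. Your absolute-continuity argument on compact subintervals of $[0,\tau_{\Omega,N})$ makes explicit what the paper's proof leaves implicit.
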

\begin{proof}
Dropping $\|z_N\|_2^2$ from \eqref{eq:dissipation-s6} and using $V_{\mathrm{cl},N}
\le
\lambda_{\max}(P_N)\|\chi_N\|_2^2$ gives
\[
\dot V_{\mathrm{cl},N}
\le
-a_NV_{\mathrm{cl},N}
+
\gamma_{N,\mathrm a}^2m_{w,N}^2
\text{a.e. on }[0,\tau_{\Omega,N}).
\]
Multiplying by $e^{a_Nt}$ and integrating from $0$ to $t$ gives
\eqref{eq:V-bound-s6}.
\end{proof}

At any order $N$ satisfying the state-coordinate retention condition
\eqref{eq:state-coordinate-retention}, define the physical-state and
tracking-error readout gains
\begin{equation}
\begin{aligned}
\kappa_{\xi,N}
\coloneqq
\left\|
C_{\xi,N}P_N^{-1/2}
\right\|_{2\to2},
\\
\kappa_{e,N}
\coloneqq
\|C_{e,N}P_N^{-1/2}\|_{2\to2},
\label{eq:kappas-s6}
\end{aligned}
\end{equation}

\begin{theorem}
\label{thm:preexit-uub}
Under the hypotheses of Theorem~\ref{thm:lyapunov-are}, at any order
$N$ satisfying \eqref{eq:state-coordinate-retention}, every nonlinear
closed-loop trajectory covered by Proposition~\ref{prop:outer-realization}
satisfies
\begin{equation}
\begin{aligned}
\|\xi(t)\|_2
\le
\kappa_{\xi,N}
\left[\begin{gathered}
e^{-a_Nt}V_{\mathrm{cl},N}(0)\\
+
\frac{\gamma_{N,\mathrm a}^2}{a_N}
\bigl(1-e^{-a_Nt}\bigr)m_{w,N}^2
\end{gathered}\right]^{1/2}
\label{eq:preexit-state-s6}
\end{aligned}
\end{equation}
for every $0\le t<\tau_{\Omega,N}$. Moreover,
\begin{equation}
\begin{aligned}
&\|r(t)-h(x(t))\|_2\\
&\quad\le
\kappa_{e,N}
\left[
e^{-a_Nt}V_{\mathrm{cl},N}(0)
+
\frac{\gamma_{N,\mathrm a}^2}{a_N}
\bigl(1-e^{-a_Nt}\bigr)m_{w,N}^2
\right]^{1/2}
\\
&\quad +
\|D_{e,N}\|_{2\to2}m_{w,N}
\label{eq:preexit-error-s6}
\end{aligned}
\end{equation}
for almost every $0\le t<\tau_{\Omega,N}$.
\end{theorem}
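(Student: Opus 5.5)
The proof combines three earlier facts: the exact physical-state readout of Corollary~\ref{cor:cck-readouts}(ii), the quadratic envelope of Corollary~\ref{cor:lyapunov-estimates}, and a standard weighted-norm inequality for readouts of the form $C\chi_N$. Fix a nonlinear trajectory covered by Proposition~\ref{prop:outer-realization} and fix $t\in[0,\tau_{\Omega,N})$. By that proposition and the state augmentation defining $G_N$, the stacked state $\chi_N=\col(\zeta_N,x_W,x_{K,N})$ satisfies the closed-loop realization \eqref{eq:closed-loop-realization-s6} almost everywhere on $[0,\tau_{\Omega,N})$, with $\|w_N\|_2\le m_{w,N}$ a.e. Hence Corollary~\ref{cor:lyapunov-estimates} applies. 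It gives $V_{\mathrm{cl},N}(t)\le \beta_N(t)^2$, where $\beta_N(t)^2$ denotes the bracketed quantity in \eqref{eq:preexit-state-s6}.

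For the state bound \eqref{eq:preexit-state-s6}, we use that $(p(t),x(t))\in\Omega$ before the first exit. The retention condition \eqref{eq:state-coordinate-retention} and \eqref{eq:exact-augmented-state-readout} then give $\xi(t)=H_{\xi,N}\zeta_N(t)=C_{\xi,N}\chi_N(t)$. If a similarity transformation is used, this identity persists with the transformed readout. Next, write $C_{\xi,N}\chi_N=(C_{\xi,N}P_N^{-1/2})(P_N^{1/2}\chi_N)$ and use $\|P_N^{1/2}\chi_N\|_2^2=V_{\mathrm{cl},N}$. This yields
\[
\|\xi(t)\|_2\le\kappa_{\xi,N}\,V_{\mathrm{cl},N}(t)^{1/2}\le\kappa_{\xi,N}\,\beta_N(t).
\]
This bound holds for every $t$ in the interval, because $V_{\mathrm{cl},N}$ is continuous in $t$ and the estimate \eqref{eq:V-bound-s6} holds pointwise.

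For the error bound \eqref{eq:preexit-error-s6}, we identify $r(t)-h(x(t))$ with $e_N(t)$. The measured signal is $y=h(x)$, and by the output equation of \eqref{eq:outer-realization}, $y=C_{y,N}\zeta_N+\nu_N^\star w_{v,N}$, which holds pointwise. Therefore $e_N=r-y=C_{e,N}\chi_N+D_{e,N}w_N$ by \eqref{eq:closed-error-s6}. This identity should be checked against the construction \eqref{eq:generalized-feedthrough-s6}: the $\sigma_{r,N}I$ block acting on $w_{r,N}=\sigma_{r,N}^{-1}r$ reproduces $r$, and the $-\nu_N^\star I$ block acting on $w_{v,N}$ reproduces the readout residual. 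The triangle inequality and the same factorization through $P_N^{-1/2}$ then give
\[
\|e_N(t)\|_2\le\kappa_{e,N}\beta_N(t)+\|D_{e,N}\|_{2\to2}\|w_N(t)\|_2 .
\]
Finally, $\|w_N(t)\|_2\le m_{w,N}$ holds only almost everywhere, since $r$ and $\delta_p$ are merely essentially bounded. This is why \eqref{eq:preexit-error-s6} is stated for almost every $t$.

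The main obstacle is bookkeeping rather than analysis. One must confirm that the nonlinear trajectory, together with the filter and controller states, satisfies the linear closed-loop equations before the exit time, including on the readout channel. That is exactly what Proposition~\ref{prop:outer-realization} and the subsequent remark on state augmentation assert. One must also confirm that $\chi_N$ is locally absolutely continuous, so that the differential inequality behind \eqref{eq:V-bound-s6} integrates validly. The remaining steps are the two Cauchy--Schwarz-type readout estimates.
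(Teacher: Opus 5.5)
Your proposal is correct and follows the paper's proof essentially step for step. The paper uses the same ingredients. For the state bound it combines the exact readout $\xi(t)=H_{\xi,N}\zeta_N(t)=C_{\xi,N}\chi_N(t)$ before exit with the factorization $C\chi_N=(CP_N^{-1/2})(P_N^{1/2}\chi_N)$ and Corollary~\ref{cor:lyapunov-estimates}. For the tracking bound it uses the closed-loop identity $e_N=C_{e,N}\chi_N+D_{e,N}w_N$ with the triangle inequality, where the almost-everywhere qualifier comes from $\|w_N(t)\|_2\le m_{w,N}$ holding only almost everywhere.
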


\begin{proof}
Apply the closed-loop embedding above and
Corollary~\ref{cor:lyapunov-estimates}.

Under \eqref{eq:state-coordinate-retention},
Corollary~\ref{cor:cck-readouts}\textnormal{(ii)} gives
$\xi(t)=H_{\xi,N}\zeta_N(t)=C_{\xi,N}\chi_N(t)$
for every $0\le t<\tau_{\Omega,N}$. Since $P_N\succ0$,
\[
\begin{aligned}
\|\xi(t)\|_2
&\le
\left\|
C_{\xi,N}P_N^{-1/2}
\right\|_{2\to2}
\|P_N^{1/2}\chi_N(t)\|_2\\
&=
\kappa_{\xi,N}
\sqrt{V_{\mathrm{cl},N}(t)}.
\end{aligned}
\]
Combining this inequality with \eqref{eq:V-bound-s6} proves
\eqref{eq:preexit-state-s6}.

Likewise, \eqref{eq:closed-error-s6} gives, for almost every
$t<\tau_{\Omega,N}$,
\[
\begin{aligned}
\|e_N(t)\|_2
&\le
\|C_{e,N}P_N^{-1/2}\|_{2\to2}
\|P_N^{1/2}\chi_N(t)\|_2
\\
&\quad +
\|D_{e,N}\|_{2\to2}\|w_N(t)\|_2\\
&\le
\kappa_{e,N}
\sqrt{V_{\mathrm{cl},N}(t)}
+
\|D_{e,N}\|_{2\to2}m_{w,N}.
\end{aligned}
\]
Since $e_N=r-y=r-h(x)$, applying \eqref{eq:V-bound-s6} proves
\eqref{eq:preexit-error-s6}.
\end{proof}
With the zero-scale convention adopted above, define the uniform
outer-channel bound
\begin{equation}
\begin{aligned}
\bar m_{w,N}
\coloneqq
\left[
\left(
\frac{\bar r}{\sigma_{r,N}}
\right)^2
+
\mathbf 1_{\{\delta>0\}}
+
\mathbf 1_{\{\sigma_{c,N}^{\star}>0\}}
+
\mathbf 1_{\{\nu_N^{\star}>0\}}
\right]^{1/2},
\label{eq:muN-outer-s6}
\end{aligned}
\end{equation}
where $\mathbf 1_{\{\cdot\}}$ denotes the indicator. The definition of
$w_{r,N}$, Assumption~\ref{ass:feedback-setting}, and
\eqref{eq:normalized-bounds-s6} give $m_{w,N}\le\bar m_{w,N}$.
A nonzero shaped closure channel also contributes one.
We use three sufficient checks for regional containment.

\emph{(i) Performance-storage ellipsoid.} Define
\begin{equation}
\begin{aligned}
c_N
\coloneqq
\max
\left\{
V_{\mathrm{cl},N}(0),
\frac{\gamma_{N,\mathrm a}^2}{a_N}
\bigl(\bar m_{w,N}\bigr)^2
\right\},
\label{eq:cN-s6}
\end{aligned}
\end{equation}
the closed-loop ellipsoid
\begin{equation}
\begin{aligned}
\mathfrak E_{\chi,N}(c_N)
\coloneqq
\left\{
\chi\in\R^{n_{\chi,N}}:
\chi^\top P_N\chi\le c_N
\right\},
\label{eq:Echi-s6}
\end{aligned}
\end{equation}
and its physical-state image
\begin{equation}
\begin{aligned}
\mathfrak E_{\xi,N}(c_N)
\coloneqq
\left\{
C_{\xi,N}\chi:
\chi\in\mathfrak E_{\chi,N}(c_N)
\right\}.
\label{eq:Exi-s6}
\end{aligned}
\end{equation}

\emph{(ii) Auxiliary ellipsoid.} Omit the performance-filter
states, which do not enter the feedback loop, and optimize localization
separately. In the canonical interconnection,
\[
\begin{aligned}
\dot\chi_{f,N}
&=
A_{f,N}\chi_{f,N}+B_{f,N}w_N,
\\
\chi_{f,N}
&=\col(\zeta_N,x_{K,N}),
\\
\xi
&=H_{f,N}\chi_{f,N}.
\end{aligned}
\]
where $H_{f,N}=[H_{\xi,N}\ 0]$. A separate similarity transformation
may be applied to this realization and its readout.
For each retained vector channel $j\in\mathcal J_N$, let
$m_j=\dim w_{j,N}$ and $\|w_{j,N}\|_2\le b_{j,N}$, with
$b_{r,N}=\bar r/\sigma_{r,N}$ and $b_{j,N}=1$ for uncertainty channels.

Given $P_{\mathrm{loc},N}=P_{\mathrm{loc},N}^\top\succ0$,
$\alpha_N>0$, and $\beta_{j,N}>0$ satisfying
\[
\begin{bmatrix}
\begin{gathered}
A_{f,N}^\top P_{\mathrm{loc},N}+P_{\mathrm{loc},N}A_{f,N}\\
{}+\alpha_NP_{\mathrm{loc},N}
\end{gathered}
& P_{\mathrm{loc},N}B_{f,N}\\[4pt]
B_{f,N}^\top P_{\mathrm{loc},N} &
-\operatorname{diag}_{j\in\mathcal J_N}(\beta_{j,N}I_{m_j})
\end{bmatrix}\preceq0,
\]
define
\[
\begin{gathered}
V_{\mathrm{loc},N}(\chi_f)=\chi_f^\top P_{\mathrm{loc},N}\chi_f,\\[3pt]
\mathfrak E_{f,N}(c)=\{\chi_f:V_{\mathrm{loc},N}(\chi_f)\le c\},
\end{gathered}
\]
\[
c_{\mathrm{loc},N}=\max\left\{
\begin{gathered}
V_{\mathrm{loc},N}(\chi_{f,N}(0)),\\[3pt]
\frac{\sum_{j\in\mathcal J_N}\beta_{j,N}b_{j,N}^2}{\alpha_N}
\end{gathered}
\right\}.
\]

\emph{(iii) Convolution bounds.} The auxiliary state $\chi_{\mathrm{aux},N}$ realizes the physical
feedback dynamics with an exact readout of $\xi$. Section~\ref{sec:numerics} uses the lifted auxiliary state
$\chi_{\mathrm{aux},N}=\col(\mathbf Z_N(\xi),x_{K,N})$.
Suppose these dynamics admit, before first exit, the realization
\[
\begin{aligned}
\dot\chi_{\mathrm{aux},N}
 &=A_{\mathrm{aux},N}\chi_{\mathrm{aux},N}
   +B_{\mathrm{aux},r,N}r
   +\sum_{j=1}^{m_a}B_{\mathrm{aux},j,N}v_j,\\
\xi&=H_{\mathrm{aux},N}\chi_{\mathrm{aux},N},
\qquad \|v_j\|_2\le c_j,
\end{aligned}
\]
with $A_{\mathrm{aux},N}$ Hurwitz, $m_a<\infty$, $c_j\ge0$, and
consistent initialization. Let $h_i$ be row $i$ of $H_{\mathrm{aux},N}$.
For a reference class $\mathscr R$ satisfying
Assumption~\ref{ass:feedback-setting}, choose $q_i\ge0$ such that
\[
\sup_{r\in\mathscr R}\sup_{t\ge0}
\left|h_i\int_0^t e^{A_{\mathrm{aux},N}(t-s)}
 B_{\mathrm{aux},r,N}r(s)\,ds\right|\le q_i.
\]
For a box $\Omega=\{\xi:|\xi_i|\le b_i\}$, $b_i>0$, define
\[
\begin{aligned}
d_i={}&\sup_{t\ge0}|h_ie^{A_{\mathrm{aux},N}t}
                    \chi_{\mathrm{aux},N}(0)|+q_i\\
 &+\sum_{j=1}^{m_a}c_j\int_0^\infty
     \|h_ie^{A_{\mathrm{aux},N}t}B_{\mathrm{aux},j,N}\|_2\,dt.
\end{aligned}
\]
For an initial class, take the supremum of the homogeneous term and
initial storage values over that class.

\begin{theorem}
\label{thm:regional-completion}

Under the hypotheses of Theorem~\ref{thm:preexit-uub}, let
$\xi(0)\in\operatorname{int}\Omega$ and
$\zeta_N(0)=\mathbf Z_N(\xi(0))$. Suppose that
\begin{equation}
\begin{aligned}
\mathfrak E_{\xi,N}(c_N)\subset\operatorname{int}\Omega,
\label{eq:containment-s6}
\end{aligned}
\end{equation}
or that alternative \textnormal{(ii)} above satisfies
$H_{f,N}\mathfrak E_{f,N}(c_{\mathrm{loc},N})\subset\operatorname{int}\Omega$,
or that alternative \textnormal{(iii)} has $r\in\mathscr R$ and
$d_i<b_i$ for every physical coordinate.
Then the nonlinear closed-loop trajectory is forward complete and
\begin{equation}
\begin{aligned}
\xi(t)\in\mathfrak E_{\xi,N}(c_N)\cap\operatorname{int}\Omega
,\quad t\ge0.
\label{eq:regional-invariance-s6}
\end{aligned}
\end{equation}
The estimates of Theorem~\ref{thm:preexit-uub} hold for all time, with
\begin{align}
&\limsup_{t\to\infty}\|\xi(t)\|_2
\le\kappa_{\xi,N}\frac{\gamma_{N,\mathrm a}}{\sqrt{a_N}}
\bar m_{w,N},
\label{eq:state-UUB-s6}\\
&\operatorname*{ess\,limsup}_{t\to\infty}\|r(t)-h(x(t))\|_2
\notag\\*[-2pt]
&\quad\le\left(\kappa_{e,N}\frac{\gamma_{N,\mathrm a}}{\sqrt{a_N}}
+\|D_{e,N}\|_{2\to2}\right)\bar m_{w,N}.
\label{eq:error-UUB-s6}
\end{align}

\end{theorem}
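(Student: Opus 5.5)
The plan is a continuation (barrier) argument on the first exit time $\tau_{\Omega,N}$, followed by a standard blow-up alternative and passage to the limit in the bounds of Theorem~\ref{thm:preexit-uub}.

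\emph{Setup.} By Assumption~\ref{ass:feedback-setting}(i), $f$ and $h$ are locally Lipschitz and the controller is a finite-dimensional LTI system. The closed loop of the nonlinear plant, actuator, weighting filters and $K_N$ therefore has a unique locally absolutely continuous solution on a maximal interval $[0,T_{\max,N})$. Since $\xi(0)\in\operatorname{int}\Omega$, continuity gives $\tau_{\Omega,N}>0$. On $[0,\tau_{\Omega,N})$, Proposition~\ref{prop:outer-realization} and the closed-loop embedding give \eqref{eq:closed-loop-realization-s6} with $m_{w,N}\le\bar m_{w,N}$. The consistent initialization $\zeta_N(0)=\mathbf Z_N(\xi(0))$ makes $V_{\mathrm{cl},N}(0)$ the storage of the actual lifted state.

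\emph{Trapping step.} Under \eqref{eq:containment-s6}, Corollary~\ref{cor:lyapunov-estimates} bounds the right side of \eqref{eq:V-bound-s6} by a convex combination of $V_{\mathrm{cl},N}(0)$ and $\gamma_{N,\mathrm a}^2\bar m_{w,N}^2/a_N$. Hence $V_{\mathrm{cl},N}(t)\le c_N$, so $\chi_N(t)\in\mathfrak E_{\chi,N}(c_N)$ and $\xi(t)=C_{\xi,N}\chi_N(t)\in\mathfrak E_{\xi,N}(c_N)$ for all $t<\tau_{\Omega,N}$.

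For alternative (ii), the LMI yields
\[
\dot V_{\mathrm{loc},N}\le-\alpha_NV_{\mathrm{loc},N}+\textstyle\sum_j\beta_{j,N}\|w_{j,N}\|_2^2 .
\]
The same comparison argument then gives $V_{\mathrm{loc},N}\le c_{\mathrm{loc},N}$, so $\xi\in H_{f,N}\mathfrak E_{f,N}(c_{\mathrm{loc},N})$.

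For alternative (iii), the variation-of-constants formula bounds each coordinate by $|\xi_i(t)|\le d_i$.

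In every case $\xi([0,\tau_{\Omega,N}))$ lies in a compact set $K\subset\operatorname{int}\Omega$.

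\emph{Exit cannot occur.} Suppose $\tau_{\Omega,N}<T_{\max,N}$. By continuity, $\xi(\tau_{\Omega,N})\in\overline{\xi([0,\tau_{\Omega,N}))}\subset K\subset\operatorname{int}\Omega$. Continuity at $\tau_{\Omega,N}$ then keeps $\xi$ in $\operatorname{int}\Omega$ on a neighbourhood of $\tau_{\Omega,N}$, contradicting the infimum in \eqref{eq:first-exit-s6}. Thus $\tau_{\Omega,N}=T_{\max,N}$.

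Suppose next that $T_{\max,N}<\infty$. Then $\xi$ stays in the compact $\Omega$. The controller and filter states satisfy a Hurwitz (or stable) linear ODE driven by bounded inputs $e_N$, $u_a$, $y$, so they remain bounded as well. The entire closed-loop state therefore stays in a compact subset of the open domain, which contradicts the blow-up/escape alternative for maximal solutions. Hence $T_{\max,N}=\infty$, giving forward completeness, \eqref{eq:regional-invariance-s6}, and the validity of Theorem~\ref{thm:preexit-uub} for all $t\ge0$.

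\emph{Ultimate bounds and main difficulty.} Taking $\limsup$ in \eqref{eq:preexit-state-s6} and \eqref{eq:preexit-error-s6} with $m_{w,N}\le\bar m_{w,N}$ removes the exponential terms and yields \eqref{eq:state-UUB-s6} and \eqref{eq:error-UUB-s6}.

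The main obstacle is the invariance claim \eqref{eq:regional-invariance-s6} under alternatives (ii)/(iii). There, membership in $\mathfrak E_{\xi,N}(c_N)$ must still be deduced from the performance-storage bound $V_{\mathrm{cl},N}\le c_N$, which requires applying Corollary~\ref{cor:lyapunov-estimates} on $[0,\infty)$ once completeness is known. A second difficulty is the careful handling of the a.e./measurability issues in the blow-up alternative with merely essentially bounded $r$ and $\delta_p$, which I would handle via Carathéodory solution theory.
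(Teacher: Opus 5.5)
Your proposal is correct and follows essentially the paper's route: trap $\xi$ in a compact subset of $\operatorname{int}\Omega$ before exit under each alternative, use continuity to rule out a first exit before $T_{\max,N}$, apply the continuation theorem to get $T_{\max,N}=\infty$, and take upper limits in the pre-exit bounds. The obstacle you flag is not one, since $V_{\mathrm{cl},N}\le c_N$ holds before exit under every alternative and so gives $\xi\in\mathfrak E_{\xi,N}(c_N)$ once exit is excluded. One point in your blow-up step needs fixing: an internally stabilizing $\mathcal H_\infty$ controller need not itself be Hurwitz, so bound $x_{K,N}$ either by a finite-interval variation-of-constants estimate (which your assumption $T_{\max,N}<\infty$ already allows) or, as the paper does, directly from $V_{\mathrm{cl},N}\le c_N$ and $P_N\succ0$, which bounds the whole state $\chi_N$ at once.
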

\begin{proof}

Corollary~\ref{cor:lyapunov-estimates} and
$m_{w,N}\le\bar m_{w,N}$ give $V_{\mathrm{cl},N}(t)\le c_N$
before exit. Under \eqref{eq:containment-s6}, the physical state therefore
remains in a compact subset of $\operatorname{int}\Omega$.
For the auxiliary ellipsoid, the matrix inequality gives
\[
\dot V_{\mathrm{loc},N}\le-\alpha_NV_{\mathrm{loc},N}
+\sum_{j\in\mathcal J_N}\beta_{j,N}b_{j,N}^2.
\]
Integration yields $V_{\mathrm{loc},N}(t)\le c_{\mathrm{loc},N}$, so
$\xi(t)\in H_{f,N}\mathfrak E_{f,N}(c_{\mathrm{loc},N})
\subset\operatorname{int}\Omega$.

For the convolution alternative, variation of constants gives
$|\xi_i(t)|\le d_i$ before exit. The physical state therefore remains
in the compact box $\{\xi:|\xi_i|\le d_i\}\subset\operatorname{int}\Omega$.
In each case, continuity excludes a first exit before $T_{\max,N}$.
Moreover, $P_N\succ0$ and $V_{\mathrm{cl},N}\le c_N$ bound the entire
closed-loop state. The physical state remains in a compact subset of
its open domain, and the exogenous inputs are essentially bounded.
The continuation theorem for locally Lipschitz state equations therefore
gives $T_{\max,N}=\infty$.

The pre-exit estimates now hold for all time. Taking their upper limits
proves \eqref{eq:state-UUB-s6}--\eqref{eq:error-UUB-s6}.
Since $V_{\mathrm{cl},N}\ge\lambda_{\min}(P_N)\|\chi_N\|_2^2$,
the complete state also satisfies
$\limsup_{t\to\infty}\|\chi_N(t)\|_2
\le\gamma_{N,\mathrm a}\bar m_{w,N}/
\sqrt{a_N\lambda_{\min}(P_N)}$.
\end{proof}
\section{Numerical example}
\label{sec:numerics}
We consider the actuator-driven cart--pendulum\footnote{The code used in this paper is available at
\url{https://github.com/kurkcub/RCCG}.} based on the
mechanical model in~\cite[Sec.~3.2.1]{bruder2025koopman},
\begin{equation}
\begin{aligned}
\dot p_1&=p_2,&
\dot p_2&=-10^4p_1-200p_2+10^4u_a,\\
\dot x_1&=x_2,&
\dot x_2&=a(x)+b(x_1)p_1,\qquad y=x_1,
\end{aligned}
\label{eq:numerical-plant}
\end{equation}
with $u=p_1$, $\delta_p=0$, and
\[
\begin{aligned}
a(x)&=\frac{-9.81\sin x_1-x_2/0.9
-0.45\cos x_1\sin x_1\,x_2^2}{1-0.45\cos^2x_1},\\
b(x_1)&=\frac{-0.5\cos x_1}{1-0.45\cos^2x_1}.
\end{aligned}
\]
Here $x_1$ is the pendulum angle from the downward vertical
in radians, $x_2$ is its angular velocity in rad/s, and $p_1$ is
the horizontal cart force. The cart and pendulum masses are $1.1$
and $0.9$ kg, the length is $L_{\rm pend}=1$ m, and the rotational
damping coefficient is $1$ N\,m\,s.

We first examine model accuracy as the dictionary degree
increases. We then establish regional closed-loop bounds and compare
Galerkin and Jacobian $\mathcal H_\infty$ feedback, backstepping, and
Koopman-based hybrid MPC.

\subsection{Realization, residual, and dictionary degree}
The Galerkin model uses Lebesgue measure
$d\mu(\vartheta)=dp_1\,dp_2\,dx_1\,dx_2$ on
\[
\Omega=[-100,100]\times[-10^5,10^5]
\times[-0.75,0.75]\times[-0.5,0.5]
\]
in the order $\xi=\col(p_1,p_2,x_1,x_2)$. Polynomials, sine and cosine have continuous derivatives of every order. The expressions for $a(x)$ and $b(x_1)$ combine these functions with division by $1-0.45\cos^2x_1\ge0.55>0$. Therefore the vector field in~\eqref{eq:numerical-plant} has continuous derivatives of every order on a neighborhood of $\Omega$. The actuator matrices satisfy $C_pB_p=0$ for
$C_p=[1\ 0]$ and $B_p=[0\ 10^4]^\top$.
With the physicists' Hermite polynomials, the raw observables are
\begin{equation}
\Phi_{\mathrm{raw}}=\tfrac12\col\!\left(
\begin{gathered}
H_1(x_1),\ H_1(x_2),\ H_3(x_2),\\
H_1(x_1)H_2(x_2),\ H_2(x_1)H_1(x_2),\ H_3(x_1)
\end{gathered}\right).
\label{eq:numerical-dictionary}
\end{equation}
Thus $N=6$ and $n_N=8$. Multilevel dimension iteration for tensor products
(MDI-TP)~\cite{FengZhong2022MDI} evaluates the integrals in
$A_NR_N=Q_N$ using Gauss--Legendre rules of orders $(4,4,64,12)$.
The actuator and input blocks are retained exactly. The Gram factorization
$\int_\Omega\Phi_{\mathrm{raw}}\Phi_{\mathrm{raw}}^\top d\vartheta=L_{\rm Gram}L_{\rm Gram}^\top$
relates these coordinates to $\mathbf\Phi_N=L_{\rm Gram}^{-1}\Phi_{\mathrm{raw}}$.
Numerical matrices and residuals are expressed in raw coordinates.
The controller and performance certificates use the stored quadrature
matrix and its directly evaluated residual.
The first two modes give the plant-state readout exactly, so
$\nu_N^\star=0$.

We evaluate $\mathbf r_N=\boldsymbol\gamma_N-(A_N\mathbf Z_N)_{3:8}$.
The exact identities $\dot\phi_1^{\mathrm{raw}}=\phi_2^{\mathrm{raw}}$
and $\dot\phi_6^{\mathrm{raw}}=3\phi_5^{\mathrm{raw}}$ give
$F_N=[e_1\ e_6]^\top$ and $U_N=[e_2\ e_3\ e_4\ e_5]$.
For $D_N=U_NR_D$, a nonsingular factor is
\[
R_D\simeq\begin{bmatrix}
17.9590&4.73624&-0.101303&-23.0284\\
-280.872&305.299&0.837300&190.546\\
1.52533&-16.7924&-317.824&0.00106679\\
-71.7962&-22.2351&0.467931&106.407
\end{bmatrix}.
\]
Interval arithmetic using its full-precision coefficients verifies
\[
\sup_{\vartheta\in\Omega}
\|R_D^{-1}U_N^\top\mathbf r_N(\vartheta)\|_2\le0.99999<1.
\]
This realizes Proposition~\ref{prop:shaped-residual} with
$S_N=R_DR_D^\top$, up to an orthogonal channel transformation.
Affinity in $p_1$ and independence of $p_2$ reduce the verification
to the actuator endpoints and the plant-state rectangle.

Figure~\ref{fig:wide-realization}(a) uses
$u_a(t)=12\sin(0.2t)+2\sin(0.47t)$ and $\xi(0)=(0,0,0,0.1)$
over $40$ s. The path remains in $\Omega$. With the actual residual, the errors in $x_1$ and $x_2$ are within numerical precision, and the normalized residual peaks at $0.0952$.
Panel (b) uses a $17\times17$ grid over
$x_1(0)\in[-0.7,0.7]$, $x_2(0)\in[-0.45,0.45]$, with
$p_1(0)=-19.62\tan x_1(0)$, $p_2(0)=0$, and
$u_a(t)=p_1(0)+2\sin(0.2t)$. The simulations in panel (b) use RK4 with a $2.5\times10^{-4}$ s step.
Paths contribute until first exit. Of 289 paths, 268 remain at
$10$ s, with largest normalized residual $0.1878$.

Degrees $d=1,3,5$ give $4,8,14$ lifted states under the same
region, measure, command, and initialization as panel (a).
Peak angle errors are $(0.136,0.0522,0.0512)$ rad, with velocity
errors $(0.0688,0.0207,0.0205)$ rad/s. The dictionary uses state
polynomials to approximate $a(x)$ and retains actuator coordinates separately.
The coupling $b(x_1)p_1$ remains outside this family, explaining
the smaller change from degree three to five. Products of actuator
and state observables provide a further enrichment. We use $d=3$ for controller synthesis.

\begin{figure*}[t]
\centering
\begin{subfigure}[b]{0.35\textwidth}
\centering
\includegraphics[width=\linewidth]{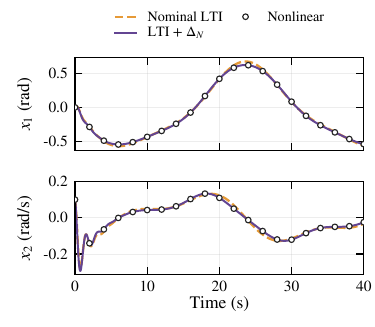}
\caption{Nominal prediction and residual correction.}
\end{subfigure}\hfill
\begin{subfigure}[b]{0.30\textwidth}
\centering
\includegraphics[width=\linewidth]{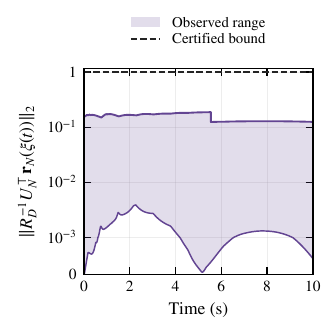}
\caption{Normalized residual before first exit.}
\label{fig:wide-residual}
\end{subfigure}\hfill
\begin{subfigure}[b]{0.32\textwidth}
\centering
\includegraphics[width=\linewidth]{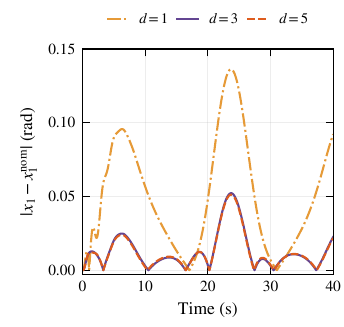}
\caption{Nominal angle error versus degree.}
\end{subfigure}
\caption{Realization and degree study on $\Omega$.
(a) Nominal and residual-corrected predictions against the nonlinear
response. (b) Normalized residual before first exit. (c) Angle errors
for nested degrees under the same command and initialization.}
\label{fig:wide-realization}
\end{figure*}

\subsection{Output feedback and regional guarantees}
The controller uses only $e_N=r-x_1$. With
the reference-channel scale $\sigma_{r,N}=0.682$, the weights are
\begin{equation}
\begin{aligned}
W_P(s)&=\left(\frac{s/\sqrt{M_P}+\omega_b}
{s+\sqrt{\varepsilon_P}\omega_b}\right)^2,\\
W_U(s)&=10^{-4}\left(\frac{s+40}{s+400}\right)^4,
\qquad W_T=0,
\end{aligned}
\label{eq:numerical-weights}
\end{equation}
where $M_P=3/32$, $\varepsilon_P=9.375\times10^{-6}$, and
$\omega_b\simeq115.5$ rad/s. Two-Riccati synthesis followed by a scalar output-gain factor
of $1.1$ gives a 14-state controller with $D_K=0$. The stable generalized-plant
realization satisfies Assumption~\ref{ass:hinf-feasibility}.
The resulting closed loop is internally stable, with numerical
weighted norm $11.44$ and bound
$\|T_{zw,N}\|_\infty<12.1$.

\begin{figure}[!b]
\centering
\includegraphics[width=\columnwidth]{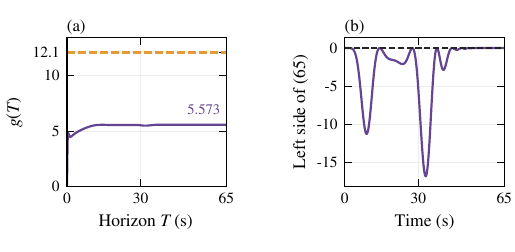}
\caption{Zero-initial frequency sweep.
(a) Finite-horizon weighted gain. The dashed line marks $12.1$.
(b) Left side of~\eqref{eq:dissipation-s6} using a performance storage
function with $\gamma_{N,\mathrm a}=12.1$.}
\label{fig:weighted-sweep}
\end{figure}

The references are linearly interpolated between knots
spaced by $h=0.02$ s and continued by their terminal value.
For $r_k=r(kh)$, $s_k=(r_{k+1}-r_k)/h$, and
$s_{-1}=s_M=0$, they satisfy $r_0=0$ and

\begin{equation}
|r_k|\le0.625,\quad |s_k|\le0.105,\quad |s_k-s_{k-1}|\le0.0082,
\label{eq:numerical-references}
\end{equation}

including the initial and terminal slope jumps.
The coherent initial class is
\begin{equation}
\begin{aligned}
p(0)&=0,\quad |x_1(0)|\le10^{-7},\quad |x_2(0)|\le0.002,\\
x_{K,N}(0)&=x_W(0)=0,\qquad
\zeta_N(0)=\mathbf Z_N(\xi(0)).
\end{aligned}
\label{eq:numerical-initial-class}
\end{equation}

The full closed loop contains $8+2+4+14=28$ states.
In fixed analysis coordinates, a witness for
\eqref{eq:dissipation-s6} uses $P_N\simeq5.623\times10^5 I_{28}$,
$\varepsilon_N\simeq1.273\times10^5$, and
$\gamma_{N,\mathrm a}\simeq564.1$.

For localization, use the subregion
\[
\begin{aligned}
\Omega_{\mathrm{loc}}={}&[-32,32]\times[-3200,3200]\\
&\times[-0.725,0.725]\times[-0.47,0.47]\subset\Omega.
\end{aligned}
\]
The auxiliary realization uses the same cubic lift and controller, with
$\chi_{\mathrm{aux},N}=\col(\mathbf Z_N(\xi),x_{K,N})\in\R^{22}$.
It gives a second exact decomposition for localization.
The state matrix $\widetilde A_N$ for the lifted component equals
$A_N$ except in the acceleration row,
\[
e_4^\top\widetilde A_N
=(1-\lambda)[\ell_0\ \ 0_{1\times4}]
+\lambda e_4^\top A_N,\qquad \lambda=0.02,
\]
where $e_4\in\R^8$, and $\ell_0$ is the degree-one Galerkin
projection of the acceleration onto $\xi$ over $\Omega$:
\[
\begin{aligned}
\ell_0&=Q(a(x)+b(x_1)p_1,\xi)R(\xi)^{-1}\\
&\simeq(-0.7404,0,-13.74,-1.795).
\end{aligned}
\]
With the controller fixed, $\lambda$ and $D_{\mathrm{loc}}$ are selected
numerically to place the convolution box inside $\Omega_{\mathrm{loc}}$.

For
$\widetilde{\mathbf r}_N(\xi)=\boldsymbol\gamma_N(\xi)
-(\widetilde A_N\mathbf Z_N(\xi))_{3:8}$,
interval arithmetic gives
\[
\sup_{\xi\in\Omega_{\mathrm{loc}}}
\|D_{\mathrm{loc}}^{-1}U_N^\top\widetilde{\mathbf r}_N(\xi)\|_2
\le0.999992<1,
\]
with
\[
D_{\mathrm{loc}}\simeq
\begin{bmatrix}
9.03664&0&0&0\\
-62.1879&367.912&0&0\\
-24.0214&-24.0345&252.862&0\\
-33.7007&7.11208&-4.45550&16.2429
\end{bmatrix}.
\]
Computations use the full-precision coefficients supplied with the code.
The lifted-plant closure term is $E_{c,N}U_ND_{\mathrm{loc}}v_c$,
where $v_c=D_{\mathrm{loc}}^{-1}U_N^\top\widetilde{\mathbf r}_N$
and $\|v_c\|_2\le1$.

\begin{figure}[t]
\centering
\includegraphics[width=\columnwidth]{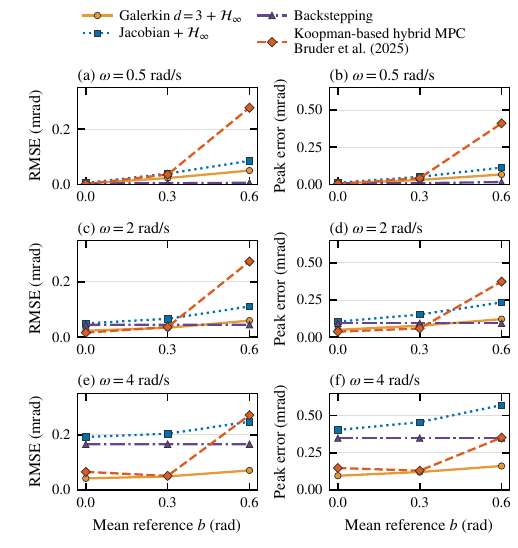}
\caption{Tracking RMSE (left) and peak error (right)
over $0$--$80$ s. Rows correspond to $\omega=0.5$, $2$, and $4$ rad/s.
The oscillation amplitude is $0.024$ rad. Initial states are zero.}
\label{fig:controller-comparison}
\end{figure}

The auxiliary closed-loop matrix $A_{\mathrm{aux},N}$ is Hurwitz.
The reference bound uses the DC gain and first moment. The slope-jump
contribution is bounded using the uniform knot spacing.
For the initial conditions in~\eqref{eq:numerical-initial-class} and references satisfying~\eqref{eq:numerical-references}, convolution and homogeneous-response bounds give

\[
d_1<32,\quad d_2<2602,\quad d_3<0.642,\quad d_4<0.440.
\]
Theorem~\ref{thm:regional-completion}(iii) then gives all-time containment
in $\Omega_{\mathrm{loc}}$.
Let $G_{x_1r}$ denote the auxiliary transfer from $r$ to $x_1$.
Applying the same calculation with reference transfer $1-G_{x_1r}$ gives
$|r(t)-x_1(t)|<0.0164$ rad for all $t\ge0$ over these reference
and initial classes.

A tracking experiment uses a variable-amplitude sweep from $0.22$ to
$10$ rad/s over $60$ s followed by $5$ s at zero reference, with
all initial states zero. Its piecewise-linear knots satisfy
\eqref{eq:numerical-references}.
Both weighted outputs and the four actual closure channels enter
\[
\begin{gathered}
g(T)=\left(\frac{\int_0^T\|z_N(t)\|_2^2dt}
{\int_0^T\|w_N(t)\|_2^2dt}\right)^{1/2},\\
\|w_N\|_2^2=\|w_{c,N}\|_2^2+|r/0.682|^2.
\end{gathered}
\]
The two energies are integrated with the nonlinear equations.
For zero initial states, the performance certificate gives $g(T)\le12.1$
for every finite $T$ with nonzero input energy.
Figure~\ref{fig:weighted-sweep} gives $g(65)=5.573$ in (a) and shows the nonpositive left side of~\eqref{eq:dissipation-s6} in (b).
The maximum angle and tracking error are $0.2608$ rad and
$3.67\times10^{-5}$ rad, respectively.

The sweep uses RK4 with a $10^{-5}$ s step aligned with the
reference knots. All simulations are performed in Python.

\subsection{Controller comparison}
\label{sec:controller-comparison}

For the Jacobian comparison, we linearize the complete actuator--pendulum
model in~\eqref{eq:numerical-plant} at the origin. We apply the same
$\mathcal H_\infty$ synthesis and weights as in the Galerkin design,
and connect the controller to the original nonlinear plant.
Both output-feedback controllers use the tracking error $r-x_1$.

We implement backstepping~\cite[Sec.~14.3]{khalil2002nonlinear}
on the same actuator-augmented model as in
\cite[Secs.~6.1 and~7.2]{phan2026robust}. It uses the four physical states to
cancel the nominal nonlinear terms and set the error dynamics through
four recursive gains. We use equal gains $k_i=8$ and generate reference
derivatives with $Q_c(s)^jr$, where $Q_c(s)=500s/(s+500)$.

\begin{figure}[t]
\centering
\includegraphics[width=\columnwidth]{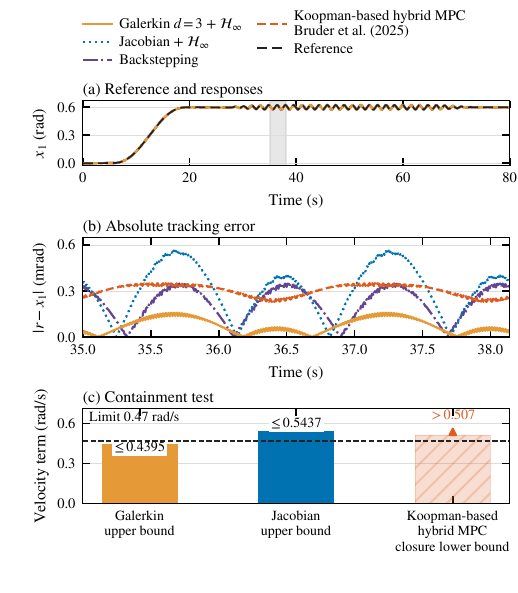}
\caption{Tracking and containment at $b=0.6$ rad
and $\omega=4$ rad/s. (a) Reference and outputs. (b) Absolute errors
in the shaded interval. (c) Velocity enclosure terms for the common
reference and initial classes. The MPC bar is the closure-contribution
lower bound. The dashed line marks $0.47$ rad/s.}
\label{fig:comparison-time}
\end{figure}

Koopman-based hybrid MPC follows~\cite{bruder2025koopman}. A physics predictor is
formed from the known actuator--pendulum equations using the actuator
coordinates and Hermite observables of the pendulum state. We project
the observable derivatives and discretize the resulting generator.
Simulated input--state pairs provide a residual correction to this
predictor, and a separate set of pairs selects its mixing weight.
The combined predictor is used for finite-horizon tracking with the
four physical states and one second of reference preview.
The first-move gains are precomputed for the unconstrained problem.
Appendix~\ref{app:mpc-containment} gives the sampled control law.

The references combine mean angles $b\in\{0,0.3,0.6\}$ rad and
frequencies $\omega\in\{0.5,2,4\}$ rad/s. Each starts with a smooth
transition to its mean angle, followed by a windowed sinusoid of
amplitude $0.024$ rad. The interpolated knots satisfy
\eqref{eq:numerical-references}. All controllers use fixed parameters
and the same nonlinear plant, with zero initial states.
RMSE and the largest recorded tracking error are evaluated over
$80$ s on a common output grid.

Figure~\ref{fig:controller-comparison} shows the errors over the
reference set. Both error measures, averaged over these tests,
are lowest for the Galerkin controller. It improves on the Jacobian
design throughout the comparison. At the highest comparison frequency,
it also gives the lowest RMSE and peak error among all four controllers.
Koopman-based hybrid MPC gives the lowest RMSE at zero mean for the two lower
frequencies. At the lowest frequency, backstepping gives the lowest
RMSE at nonzero mean angles. Figure~\ref{fig:comparison-time}(a)--(b)
shows the responses and error detail for the largest mean angle
and comparison frequency.

Both $\mathcal H_\infty$ controllers are analyzed on the same
shaped outer realization over $\Omega$, with inputs
$\col(w_{c,N},r/0.682)$ and weights~\eqref{eq:numerical-weights}.
The gain bounds are $\|T_{\mathrm{Gal}}\|_\infty<12.1$ and
$45.3<\|T_{\mathrm{Jac}}\|_\infty<47.7$.
Containment uses the common cubic plant enclosure, residual
channel, and reference and initial classes. The Galerkin bounds
satisfy all four coordinate conditions.
Figure~\ref{fig:comparison-time}(c) compares the velocity terms.
The MPC term is a strict lower bound on the independent closure
contribution, derived in Appendix~\ref{app:mpc-containment}.

\Needspace{7\baselineskip}
\section{Conclusion}
\label{sec:conclusion}

This article developed a model-based Galerkin construction that preserves
the actuator block and constant input matrix while retaining finite-order
nonclosure as an additive residual. Regional residual enclosures provide
explicit uncertainty channels for $\mathcal{H}_\infty$ output-feedback
synthesis. Quadratic estimates and exact readout yield physical-state and
tracking bounds, while full-state or auxiliary localization gives
forward completeness for the certified initial class. The cart--pendulum example provides these regional
certificates and compares the proposed output feedback with Jacobian-based
$\mathcal H_\infty$ feedback, full-state backstepping, and Koopman-based hybrid MPC.
The proposed controller gives the lowest RMSE and peak tracking error
on average over the reference set.

The analysis assumes a known nonlinear vector field $f(x,u)$.
Extending the guarantees to uncertainty in $f$ remains future work.
The resulting bounds remain conservative because the residual channels
are treated as independent bounded inputs. Future work will use data to
refine the enclosures and investigate adaptive updates and
disturbance/uncertainty observers while retaining deterministic error
bounds. Actuator saturation is not included and is a principal limitation
of the present nonlinear extension. Accounting for amplitude and rate
constraints in the realization and regional analysis is therefore an
important next step.

Stabilizability and detectability are imposed on the generalized plant
rather than guaranteed by the Galerkin construction. We will investigate
conditions on the lifting that preserve these properties. The combined
order of the Galerkin model, weighting filters, and full-order
$\mathcal{H}_\infty$ controller can also be large. Reduced-order and low-rank
representations that retain deterministic residual bounds and robustness
guarantees remain to be developed. Finally, $C_pB_p=0$ is a sufficient
structural condition. A necessary and sufficient characterization for a
chosen lifting family may allow this condition to be relaxed without
losing the constant input structure.

\appendices
\section{A sufficient condition for growing-residual convergence}
\label{app:growing-residual}
Suppose $M=\infty$ and \eqref{eq:pointwise-distance} holds. If
\begin{equation}
\sum_{k=1}^{\infty}
\dist_{\mathcal H}^2(\widehat\gamma_k,W_k)<\infty,
\label{eq:l2-majorant}
\end{equation}
then
\begin{equation}
\|\mathbf r_N\|_{\mathcal H^N}\to0
\quad(N\to\infty).
\label{eq:closure-decay}
\end{equation}
Indeed, set $d_{k,N}=\dist_{\mathcal H}^2(\widehat\gamma_k,W_N)$ for
$k\le N$ and $d_{k,N}=0$ otherwise. Nesting gives
$0\le d_{k,N}\le\dist_{\mathcal H}^2(\widehat\gamma_k,W_k)$, and
\eqref{eq:pointwise-distance} gives $d_{k,N}\to0$ for each fixed $k$.
Dominated convergence and \eqref{eq:best-approx} prove
\eqref{eq:closure-decay}.

\section{Proof of Proposition~\ref{cor:uniform-closure}}
\label{app:uniform-closure-proof}
\begin{proof}
The range of $\widetilde\Pi_{W_N}$ is contained in $W_N$, and
linearity is immediate. Moreover,
\[
\begin{aligned}
\|\widetilde\Pi_{W_N}g\|_{L^\infty(\Omega)}
&\le
\|R_N^{-1}\|_{2\to2}
\int_\Omega \|Z_N(\vartheta)\|_2\,d\mu(\vartheta)\\
&\quad\cdot
\sup_{\vartheta\in\Omega}\|Z_N(\vartheta)\|_2
\|g\|_{L^\infty(\Omega)},
\end{aligned}
\]
so $\widetilde\Pi_{W_N}$ is bounded.

For $g\in C(\Omega,\R)$, Lemma~\ref{thm:general-proj}, applied with
$m=1$, and the integral cross-Gram formula give
\[
\Pi_{W_N}g
=
\left(
\int_\Omega g Z_N^\top\,d\mu
\right)
R_N^{-1}Z_N
=
\widetilde\Pi_{W_N}g
\]
in $\mathcal H$, proving \eqref{eq:Pi-coincides}. For $w\in W_N$, the standing convention therefore gives
\[
\widetilde\Pi_{W_N}w=w
\quad\text{on }\Omega.
\]
Thus $\widetilde\Pi_{W_N}$ is a projection with range $W_N$.

Equation~\eqref{eq:Pi-coincides} and the standing convention identify
the residual components as
\[
\res_{W_N}\widehat\gamma_k
=
(I-\widetilde\Pi_{W_N})\widehat\gamma_k
\quad\text{on }\Omega.
\]
The standard projection estimate
\cite[Sec.~4.4, Thm.~5, p.~193]{cheney2001analysis} yields
\[
\begin{aligned}
&\|\res_{W_N}\widehat\gamma_k\|_{L^\infty(\Omega)}\\
&\quad\le
\|I-\widetilde\Pi_{W_N}\|_{C(\Omega)\to C(\Omega)}
\inf_{w\in W_N}
\|\widehat\gamma_k-w\|_{L^\infty(\Omega)}\\
&\quad\le
(1+\Lambda_N)
\inf_{w\in W_N}
\|\widehat\gamma_k-w\|_{L^\infty(\Omega)}.
\end{aligned}
\]
Since residuals on $W_N^N$ act componentwise,
$(\mathbf r_N)_k(\vartheta)
=
\bigl(\res_{W_N}\widehat\gamma_k\bigr)(\vartheta)$ where $\vartheta\in\Omega$,
for $k=1,\dots,N$. Hence, for every $\vartheta\in\Omega$,
\[
\begin{aligned}
\|\mathbf r_N(\vartheta)\|_2^2
&=
\sum_{k=1}^N
\left|
\bigl(\res_{W_N}\widehat\gamma_k\bigr)(\vartheta)
\right|^2\\
&\le
\sum_{k=1}^N
\|\res_{W_N}\widehat\gamma_k\|_{L^\infty(\Omega)}^2\\
&\le
(1+\Lambda_N)^2
\sum_{k=1}^{N}
\left(
\inf_{w\in W_N}
\|\widehat\gamma_k-w\|_{L^\infty(\Omega)}
\right)^2.
\end{aligned}
\]
Taking the maximum over $\vartheta\in\Omega$ and then square roots proves
\eqref{eq:uniform-closure-bound}.
\end{proof}

\section{Sampled containment calculation for Koopman-based hybrid MPC}
\label{app:mpc-containment}

MPC~\cite{bruder2025koopman} holds $u_k$ for $T_s=0.005$ s.
Split $K_x\Psi_{29}=L_c\mathbf Z_N+\eta$, with all other terms in $\eta$. For the common enclosure with $\|v_c\|_2\le1$, put
$G_{\mathrm{loc}}=E_{c,N}U_ND_{\mathrm{loc}}$ and
$\chi_k=\col(\mathbf Z_N(\xi_k),u_k)$. Its sampled matrix and closure kernel are
\[
\begin{gathered}
F=\begin{bmatrix}
e^{\widetilde A_NT_s}&\displaystyle\int_0^{T_s}e^{\widetilde A_Ns}B_N\,ds\\
L_c&k_u
\end{bmatrix},\\
J(s)=\begin{bmatrix}e^{\widetilde A_N(T_s-s)}G_{\mathrm{loc}}\\0_{1\times4}\end{bmatrix},
\qquad \Gamma_c=\int_0^{T_s}J(s)\,ds.
\end{gathered}
\]
The update adds $K_r\mathbf r_k+\eta(\xi_k)$ to the last coordinate and
$\int_0^{T_s}J(s)v_c(kT_s+s)\,ds$ to $F\chi_k$.
Here $\xi_k=\xi(kT_s)$, $k_u$ multiplies the previous command, and
$\mathbf r_k=(r((k+j)T_s))_{j=1}^{200}$.
For $e_4\in\R^9$ selecting $x_2$, the independent closure contribution satisfies
\[
\begin{aligned}
d_{4,c}^{\mathrm{MPC}}
&=\sum_{j=0}^{\infty}\int_0^{T_s}\|e_4^\top F^jJ(s)\|_2\,ds\\
&\ge\sum_{j=0}^{39}\|e_4^\top F^j\Gamma_c\|_2
>0.507>0.47.
\end{aligned}
\]
The inequality follows from the triangle inequality, nonnegative summands,
and outward interval evaluation of forty terms. This contribution exceeds
the local velocity limit, so the common independent-channel containment
test fails.

\IEEEtriggercmd{\balance}
\IEEEtriggeratref{1}
\begingroup
\emergencystretch=1em
\bibliographystyle{IEEEtran}
\IfFileExists{library.bib}{\bibliography{library}}{\input{references.tex}}
\endgroup

\nobalance
\begin{IEEEbiography}[{\includegraphics[width=1in,height=1.25in, clip,keepaspectratio]{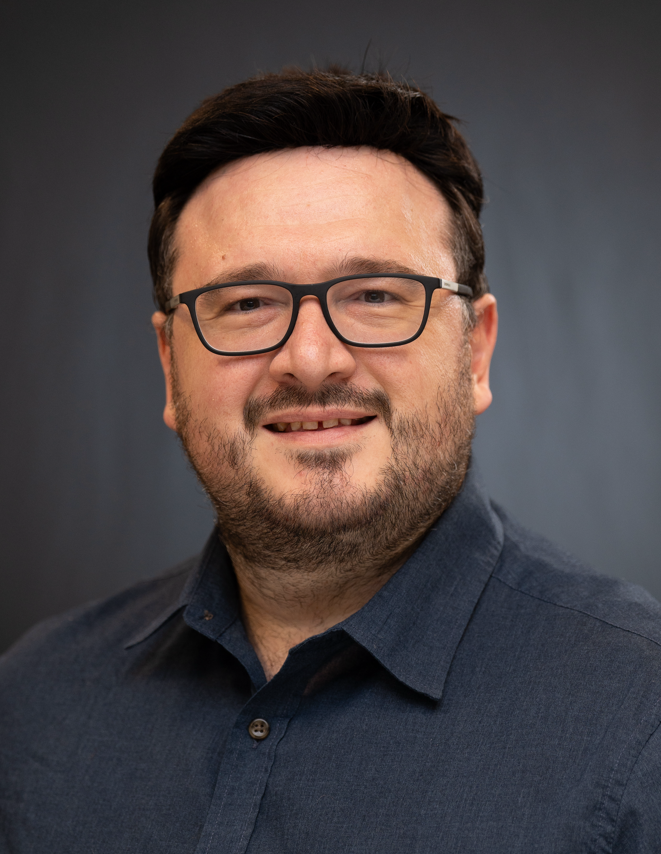}}]{Burak~K\"{u}rk\c{c}\"{u}} received the B.Sc. degree from Istanbul Technical University in 2010, and the M.Sc. and Ph.D. degrees from the Department of Electrical and Electronics Engineering, TOBB University of Economics and Technology, in 2015 and 2019, respectively.
He is currently an Assistant Professor in the Department of Electrical and Computer Engineering at Santa Clara University, California, USA. Before joining SCU, he served as a Research Scholar at UC Berkeley for two years and held various academic and industrial roles at Hacettepe University and Aselsan Inc.
His research focuses on control theory and robotics applications, and he has authored or coauthored several notable publications in these fields.

He received the IEEE Turkey Ph.D. Thesis Award in 2020. Dr. K\"{u}rk\c{c}\"{u} is also an Associate Editor for the \textit{Transactions of the Institute of Measurement and Control}, \textit{Measurement and Control}, and the \textit{Turkish Journal of Electrical Engineering and Computer Science}.
\end{IEEEbiography}

\begin{IEEEbiography}[{\includegraphics[width=1in,height=1.25in, clip,keepaspectratio]{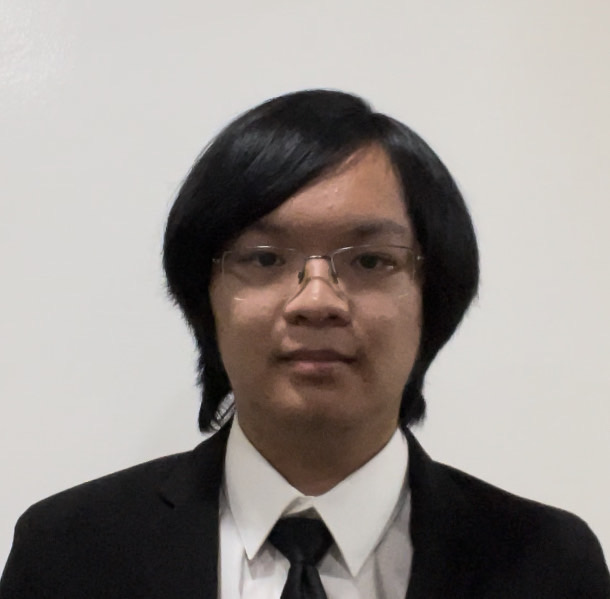}}]{Christopher Phan} received the B.Sc. degree from the Department of Mathematics and Computer Science in 2024 and the M.Sc. degree from the Department of Electrical and Computer Engineering in 2026, both at Santa Clara University, California, USA. He is currently a PhD student in the Department of Electrical and Computer Engineering at SCU as an advisee of Dr. K\"urk\c c\"u. His research focuses on control theory and robotics applications.
\end{IEEEbiography}

\newpage
\begin{IEEEbiography}[{\includegraphics[width=1in,height=1.25in, clip,keepaspectratio]{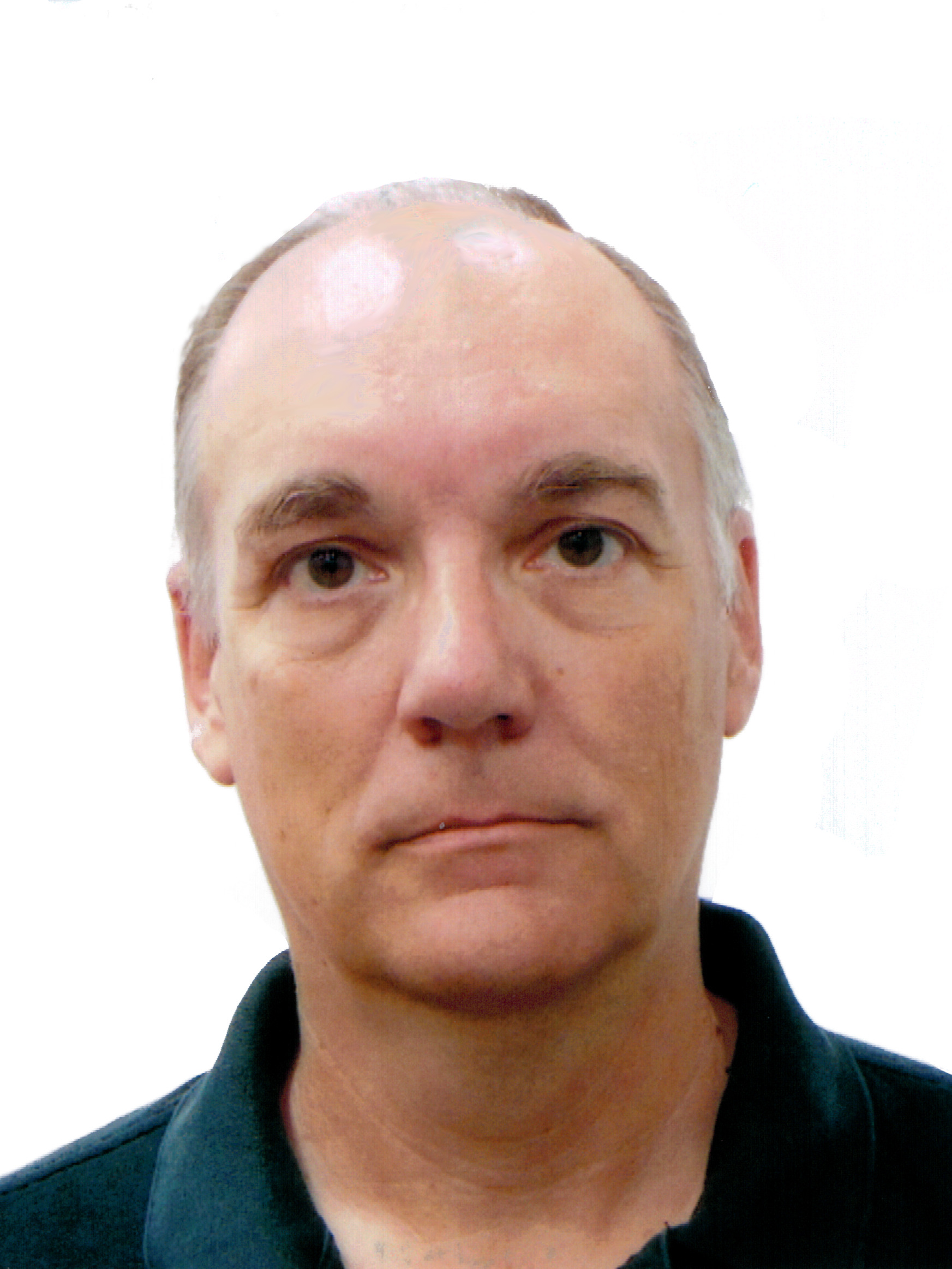}}]{Aleksandar Zecevic} received the B.S. degree in electrical engineering from the University of Belgrade, Yugoslavia, in 1984 and M.S. and Ph.D. degrees from Santa Clara University in 1990 and 1993, respectively. He is now a Professor in the School of Engineering, Santa Clara University, where he teaches courses in the area of electric circuits, control theory and nonlinear systems. He has published books and articles on a broad range of topics, including graph theoretic decomposition algorithms, control of large-scale systems, high-performance computing and power systems.
\end{IEEEbiography}

\begin{IEEEbiography}[{\includegraphics[width=1in,height=1.25in, clip,keepaspectratio]{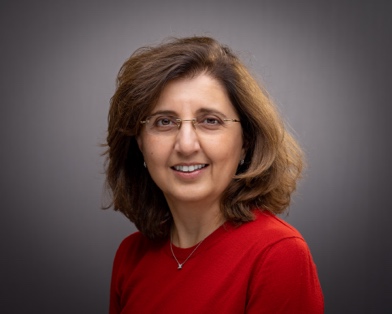}}]{Maryam Khanbaghi} received her Bachelor's degree from Universite de Nice-Sophia Antipolis, France, and M.Sc.A and Ph.D degrees from \'{E}cole Polytechnique, Montreal, Canada, in Electrical Engineering. She worked several years for Pulp and Paper Research Institute of Canada in Montreal and Vancouver as a research engineer. She joined Corning Inc. in 2000. 

In 2013, she joined Santa Clara University, Electrical Engineering Department. She is now an associate professor and the Director of the Power Systems and Sustainable Energy Graduate Program. She was the recipient of the NSF CAREER award. Her research interests include control system design and optimization for large systems, such as manufacturing and power systems. She is IEEE Santa Clara Valley PES faculty advisor and IEEE Santa Clara Valley Control Systems treasurer. She was an Associate Editor for IEEE Control Systems Magazine. She has published over forty papers in journals, conference proceedings and company internal research papers and holds two patents.
\end{IEEEbiography}
\end{document}